\newtheoremstyle{named}{}{}{\itshape}{}{\bfseries}{.}{.5em}{\thmnote{#3's }#1}
\theoremstyle{named}
\theoremstyle{definition}
\newtheorem{definition}{Definition}
\renewcommand{\tablename}{Algorithm}
\newlength{\dhatheight}
\newtheorem{lemma}{Lemma}
\newtheorem{Theorem}{Theorem}
\begin{document}

%\title{\LARGE  Art Gallery Problem: A Novel Solution, Analysis, and Application on Indoor Placement of Visible Light Communication Nodes  }

%\title{ Indoor VLC network planning via LoS graph modeling}
%\title{ VLC network planning:the basics of indoor placement via LoS graph modeling}

%\title{ VLC network planning: indoor placement via LoS graph modeling}
%\title{ VLC network planning: indoor placement optimization  via LoS graph modeling}
%\title{ VLC network planning: fundamentals of indoor placement optimization }
%\title{ Visible Light Communication Network: Indoor Planning Optimization }
%\title{ Precise Indoor Positioning:  Deployment of Reference Nodes to Guarantee LoS Condition%DetaiPRN planning of reference nodes placement to ensure LoS condition for precise indoor positioning %Indoor Planning for Visible Light Positioning Networks \vspace{-2mm}
%}
%\title{Ensuring Line-of-Sight Wireless Coverage in Stochastic Environments}

%\title{Wireless Access Point Placement to Ensure Line-of-Sight  Coverage in Stochastic Environments}

\title{Guaranteeing Line-of-Sight Wireless Connectivity in Stochastic Environments with Random Obstacles}

    \author{Mohsen Abedi, Alexis A. Dowhuszko, Mehdi Sookhak, and Risto Wichman\thanks{M. Abedi, A. Dowhuszko, and R. Wichman are with the Dept. Information and Communication Engineering, Aalto University, 02150 Espoo, Finland. Email: \{mohsen.abedi;\,alexis.dowhuszko;\,risto.whichman\}@aalto.fi \par M. Sookhak is with the Dept. Computer Science, Texas A\&M University-Corpus Christi, Texas, USA. Email:
mehdi.sookhak@tamucc.edu}}

%\author{Mohsen Abedi, Alexis A. Dowhuszko,~\IEEEmembership{Senior Member,~IEEE}, Mehdi Sookhak, and Risto Wichman

%\thanks{M. Abedi, A. A. Dowhuszko, and R. Wichman are with the Department of Information and Communications Engineering, Aalto University, Espoo, 02150, Finland. Email: \{mohsen.abedi;\,alexis.dowhuszko;\,risto.wichman\}@aalto.fi \par M. Sookhak is with the Dept. Computer Science, Texas A\&M University-Corpus Christi, Texas, USA.}
%}

\maketitle

\begin{abstract}

Advancements in high-frequency communication technologies using millimeter waves~(mmWave), Tera-Hertz~(THz), and optical wireless frequency bands are key for extending wireless connectivity beyond 5G. These technologies offer a broader spectrum than the one available on low- and mid-bands, enabling ultra-high-speed data rates, higher device density, enhanced security, and improved positioning accuracy. However, their performance relies heavily on clear Line-of-Sight~(LoS) conditions, as Non-LoS components are significantly weaker, making blockages a major challenge to ensure suitable received signal power.
This paper addresses this limitation by identifying the minimum number and optimal placement of access points (APs) needed to ensure LoS connectivity in stochastic/dynamic environments with random obstacle locations. To achieve this, the stochastic environment is carefully modeled as a graph, where the nodes represent sub-polygons of layout realizations, and the edges capture the visibility overlaps between them. By employing maximal clique clustering and maximum clique packing methods over this graph, the proposed approach determines the AP placement locations that guarantee either full LoS coverage or controlled LoS gaps, while seamlessly adapting to the stochastic variability in obstacle locations. Simulations results in a representative stochastic environment demonstrate a $25\%$ reduction in the required number of APs, achieving a tolerable $5\%$ coverage gap compared to AP deployment optimized for full LoS coverage. %, highlighting the efficiency of the proposed detailed planning algorithm.

\thispagestyle{empty}
\pagestyle{empty}
 %Specially, the new method superiority in service areas with harsh irregularity in demand distributions is highlighted.
\end{abstract}

\vspace{0mm}

%\IEEEpeerreviewmaketitle
\begin{keywords}
Wireless communications, detailed planning, blockage, stochastic environment, LoS link, graph modeling. 
\end{keywords}
%%%%%%%%%%%%%%%%%%%%%%%%%%%%%%%%%%%%%%%%%%%%%%%%%%%%%%%%%%%%%%%%%%%%%%%%%%%%%%%%%%%%%%%%%%%%%%%%%%%%%%%%%%%%%%%%%%%%%%%%%%%%%%%%%%%%

\vspace{-2mm}
\section{Introduction} %\textcolor{red}{Alexis will work here}}
%$\bullet$ 5G and 6G peak performance relies on LoS condition

To advance wireless connectivity beyond 5G, high-frequency radio technologies for wireless communications in the millimeter waves~(mmWave) and Tera-Hertz~(THz) frequency bands, as well as optical wireless technologies for indoor access over infrared and visible light bands, are gaining significant attention~\cite{rappaport2019wireless}. These technologies offer key advantages over low- and mid-band Radio Frequency~(RF) communications, including wider bandwidths for ultra-fast data transmission, higher device density support, improved security, and reduced susceptibility to interference~\cite{zhang2022mmwave, moon20226g, obeed2019optimizing}. 
These high-frequency communication technologies enable a diverse range of applications, including  mmWave/THz wireless access~\cite{al2020probability}, industrial automation and connected robotics~\cite{ ju2024statistical}, autonomous vehicles~\cite{kong2024survey, mollah2023mmwave}, Unmanned Aerial Vehicle~(UAV) networks~\cite{cheng2022channel}, Extended Reality (XR)~\cite{marinvsek2024mmwave, ma2024qoe}, and land mobile satellite systems~\cite{redondi2024survey}. However, for suitable performance across these applications, maintaining a clear Line-of-Sight~(LoS) link between transmitter and receiver is critical.

Line-of-Sight communication has been positioned to play a crucial role in shaping next-generation wireless technologies, as Non-Line-of-Sight~(NLoS) signals in high frequency bands often suffer from excessive weakening or severe penetration losses when obstructed by various materials~\cite{zhao201328}. To compensate the high path loss that mmWave and THz signals experience during propagation, high-gain directional antennas are needed. Although, these type of antennas can potentially enhance throughput, they make mmWave and THz communications even more vulnerable to blockages~\cite{liu2019analysis}. In outdoor environments, these challenges are compounded by blockages from buildings, leading to pronounced differences between LoS and NLoS path loss models. Similar to RF bands in lower frequency bands, LoS paths in mmWave communication typically exhibit an approximate path loss exponent of 2, while NLoS paths often have exponents close to 4 or higher. 

This disparity between LoS and NLoS propagation results in much better Signal-to-Interference-plus-Noise Ratio~(SINR) in the former case~\cite{bai2014coverage}. For example, the study in~\cite{korrai2019performance} reports a $30\%$ improvement in data rates at a carrier frequency of 73\,GHz for long-range LoS links compared to NLoS ones. Similarly, the study in~\cite{sur2017wifi} demonstrates around $200\%$ increase in data rates in a short-range LoS 60 GHz WiFi setup compared to a NLoS one. On the other hand,
optical wireless signals, such as those used in Free Space Optics~(FSO) and Visible Light Communication~(VLC), present additional challenges. Optical wireless signals are entirely obstructed by opaque obstacles, resulting in a significant degradation of the achievable data rates, as NLoS links experience notably weaker signal strengths in reception compared to LoS~\cite{dowh2020a}. 

Beyond communication applications, maintaining a LoS propagation link is particularly critical for positioning systems. Various 5G positioning techniques such as Round Trip Time~(RTT), Angle of Arrival~(AoA), Angle of Departure~(AoD), Time of Arrival~(ToA), and Time Difference of Arrival~(TDoA), rely heavily on LoS propagation to ensure unbiased and accurate position estimation~\cite{keat2019}. This reliance stems from the fact that NLoS signals traverse longer and more complex paths, introducing systematic errors that degrade the precision when estimating the User Equipment~(UE) position. Additionally, ToA and TDoA measurements based on 5G mmWave technology offers finer time resolution when estimating the actual propagation time of the wireless signals, enabling highly accurate positioning thanks to the significantly wider bandwidths~(up to 400\,MHz) compared to the typically narrower bandwidths of sub-6 GHz bands~(100\,MHz or less)~\cite{holm2020}.  
 The adoption of optical wireless technology in the next generation of mobile communication systems will further improve positioning accuracy, enabling precision at the centimeter or even sub-centimeter level. However, leveraging these advanced technologies for accurate positioning  face significant challenges in the absence of a LoS condition~\cite{ren2021}. This fact has motivated significant research efforts to detect the existence of LoS/NLoS when detecting received signals~\cite{choi18}.

%$\bullet$ probabilistic model for LoS

The advantages of LoS  over NLoS propagation have motivated researchers in academia and industry to develop numerous empirical models for capturing LoS probability, with most of them tailored to specific applications~\cite{al2020probability}. The International Telecommunication Union~(ITU) has proposed the most widely used statistical model for LoS probability, specifically designed for macro and micro cellular networks including defined constraints on permissible node heights~\cite{series2017guidelines}. With the advent of mmWave technology in 5G, researchers and industry professionals have conducted extensive  studies to analyze and determine the LoS probability that high-frequency radio signals experience for various distance between transmitter and receiver~\cite{haneda20165g, jassim2024new}, using  machine learning techniques~\cite{ wu2023machine}, laser scanning~\cite{jarvelainen2016evaluation}, empirical data~\cite{ barbiroli2024simple}, and as function of the spatial density, size, and height of the buildings in urban environments~\cite{ gapeyenko2021line,fujimura2023line,saboor2023geometry}. These probabilistic models are utilized for network dimensioning, enhancing wireless coverage, improving system performance, and informing the design of communication protocols. However, the complexities and dynamics of urban, rural, and indoor environments present significant challenges to carry out this task, making the development of effective strategies to maintain a consistent LoS propagation link an area that requires further investigation. % in the literature.

Efficient network deployment is a crucial approach to maximize the LoS coverage probability in wireless networks. Focusing on mmWave networks, the authors in \cite{liu2021maximizing} address the multi-AP placement problem to optimize LoS coverage in rectangular or rectilinear-shaped rooms without obstacles by formulating it as a thinnest covering problem and employing a shadowing elimination search algorithm.
In \cite{chatterjee2021joint}, the authors model the AP placement problem as a stochastic optimization problem, which is approximated using the Sample Average Approximation~(SAA) method. After linearizing the problem, the authors solved it optimally using CPLEX, a commercial optimization solver. Similarly, \cite{topal2023optimal} formulates the AP deployment problem for indoor environments with static users as a mixed-binary programming problem and applies the Branch-and-Bound Algorithm  (BBA) to derive optimal deployment.
The authors in \cite{tsai2024novel} proposed a graph-based approach to determine optimal AP placement on the vertices of the layout  by constructing a graph based on the open visibility between every pair of vertices. In \cite{abedi2021visible},  a novel graph construction method was proposed to model LoS optical wireless coverage in a static indoor environment without obstacles. By clustering this graph, the minimum number of APs and their precise locations within the layout are identified to provide full LoS coverage. Later, this approach was  extended to maintain LoS optical wireless backhauling between APs in a similar environment\cite{abedi2024indoor}. However, developing an optimal placement strategy that guarantees LoS coverage for any AP range, while also accounting for uncertainties in obstacle locations, remains a notable challenge in the existing literature. This difficulty arises because the problem, even in its simplest forms, is NP-hard and requires an exhaustive search solution method. This type of solution to determine the optimal placement in real-world scenarios is extremely complicated and often demands an impractical amount of computational resources.

In this paper, we propose a novel method to determine the minimum number of APs and their optimal locations to ensure LoS connectivity for users that take random positions in an stochastic environment with random obtacles. The approach involves developing a new graph construction and analysis framework, where the graph nodes represent sub-polygons partitioning the environment under each realization, and the edges denote the common visibility areas between sub-polygons. By partitioning this graph into the minimum number of cliques, we identify the visibility area of each clique as an AP placement location. This method guarantees LoS connectivity between a user at any random location and at least one AP, despite the random nature of obstacle locations. Unlike stochastic geometry-based approaches commonly found in the literature, this solution is deterministic, adaptable to both indoor and outdoor environments, and capable of ensuring optimal deployment across a wide range of wireless scenarios, while adhering to diverse wireless hardware and AP deployment constraints.

The contributions of this paper are summarized as follows:

\begin{itemize}
\item   The stochastic environment is represented by a set of layout realizations, each of which is partitioned into sub-polygons using a novel method called \emph{hyper-triangulation}~\cite{abedi2024indoor}. To maintain the computational efficiency of our algorithms, we determine and establish an upper bound on the parameter of the hyper-triangulation in terms of the AP range and the inradius of the convex hull of the  smallest obstacle within the environment. This parameter influences the number of sub-polygons and, subsequently, the number of nodes in the graph model.
\item Once all the layout realizations are partitioned, we create a \emph{visibility graph} whose nodes represent the resulting sub-polygons, and the edges between each node pair represent the overlap of the visibility area of the nodes. This graph will serve as the foundation for the algorithms, analysis, and optimality assessment presented in this paper.
\item We propose a \emph{maximal clique clustering} method to partition the nodes of this graph into the minimum number of cliques. We prove that the minimum number of cliques partitioning this graph represents the minimum number of APs required to ensure a full LoS coverage regardless of the location of the obstacles. Moreover, it is shown that the visibility area of each clique equivalently represents the potential placement location for a single AP. 
\item By defining an acceptable LoS coverage gap, a \emph{maximaum clique packing} method was proposed to identify the largest clique in the visibility graph and to remove a subset of nodes based on the defined coverage gap. It is also demonstrated that placing a single AP within the visibility area of each clique identified by the algorithm ensures the acceptable LoS coverage gap for the stochastic environment with the minimum number of\,APs. 
\end{itemize}

Furthermore, we demonstrate that the independence number of the visibility graph serves as a lower bound for the minimum number of APs needed to achieve a full LoS coverage in the stochastic environment. This number provides a critical benchmark for evaluating the optimality of our AP deployment strategy. In the simulation results section, we apply the proposed method to an environment with four obstacles, one of which is stochastically positioned. The results show that by allowing a tolerable coverage gap, e.g., as small as $5\%$, the number of required APs is reduced by approximately $25\%$.

The remainder of this paper is structured as follows: Section~\ref{sec:2} outlines the problem statement, providing the foundation for the study. Section~\ref{sec:3} introduces the methodology for constructing a graph model that captures the stochastic nature of the environment. Based on this model, Section~\ref{sec:4} develops the deployment strategies to ensure LoS coverage as well as an acceptable coverage gap. Section~\ref{sec:5} presents the detailed simulation results, while Section VI concludes the paper with a summary of key findings and insights.

%$\bullet$ The Use cases that require LoS links like XR, positioning , wireless sensor networks...
%\label{sec:1}
%\vspace{-0.5mm}

%$\bullet$ previous efforts on LoS coverage and other models like probabilistic LoS 

%\vspace{-2mm}
\section{Problem Statement}
\label{sec:2}
%\vspace{-0.5mm}

The maximum range of an AP operating in the mmWave or THz bands for communication or positioning purposes primarily depends on several factors, including the power, carrier frequency, hardware efficiency, atmospheric absorption, receiver sensitivity, noise and interference levels, and key antenna properties such as beamforming techniques, gain, and array size. On the other hand, in optical wireless systems such as VLC networks, the maximum range is mainly influenced by the height of the Light Emitting Diode (LED) and the Field of View (FoV) of the optical receiver \cite{abedi2024indoor}.  Fig. \ref{fig:range_definition}a) depicts the schematic of a mmWave/THz AP in an urban environment, communicating with a user in a LoS condition within its range. In contrast, two other users experience signal loss—one due to being outside the coverage range and the other due to obstruction caused by a vehicle. Similarly, Fig. \ref{fig:range_definition}b) illustrates a VLC communication scenario in an indoor environment, where a VLC AP (LED lamp) communicates with a user in a LoS condition within its range. However, two other users are unable to receive any signal due to human body blockage and being located outside the range. As a result, it is crucial to consider both the limited range and the impact of stochastic or dynamic obstacles when optimizing AP placement to ensure LoS coverage.

\begin{figure}
\begin{minipage}{\columnwidth}\hspace{0.3cm}
\includegraphics[width=3.0in]{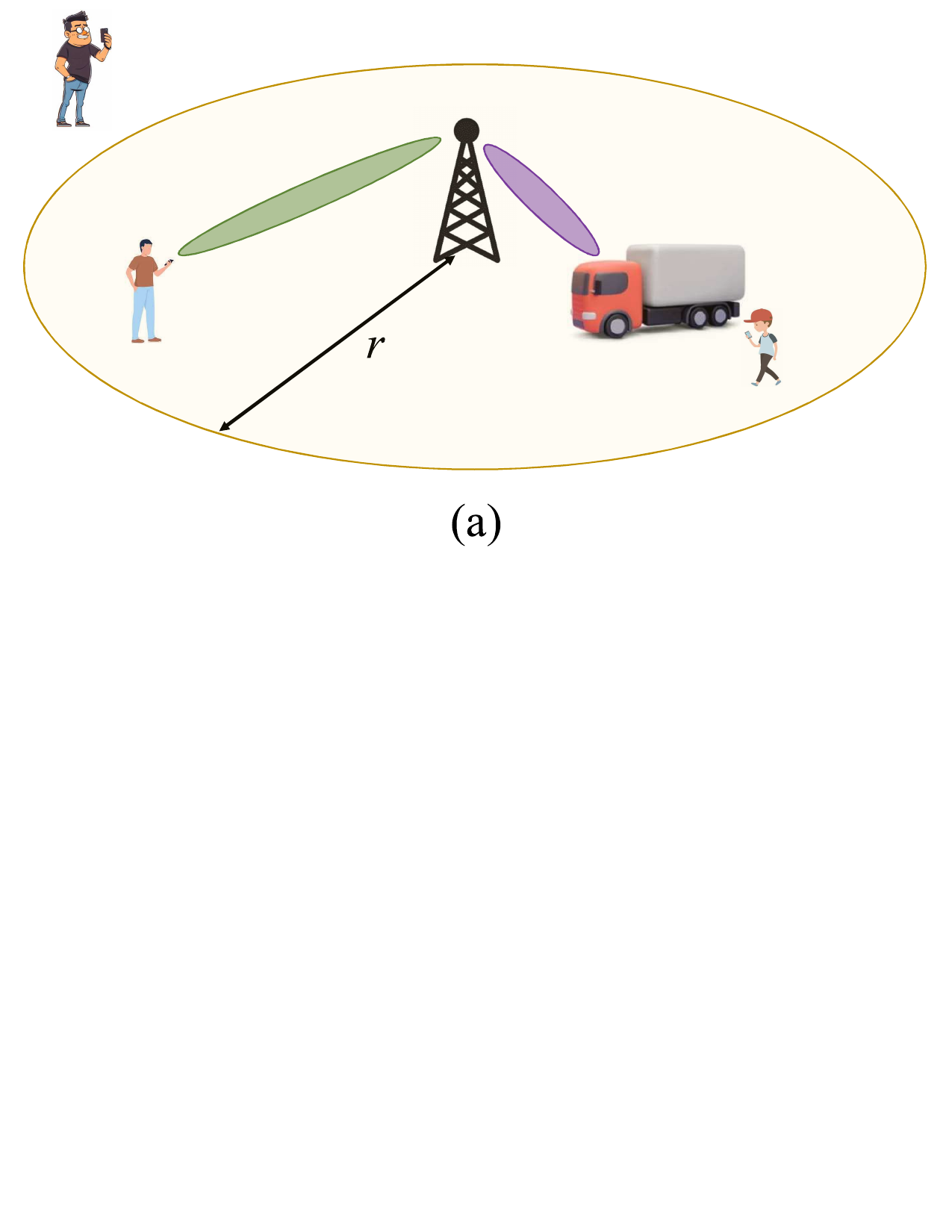}
\vspace{-5.3cm}
\end{minipage}

\begin{minipage}{\columnwidth}\hspace{0.4cm}\includegraphics[width=3.0in]{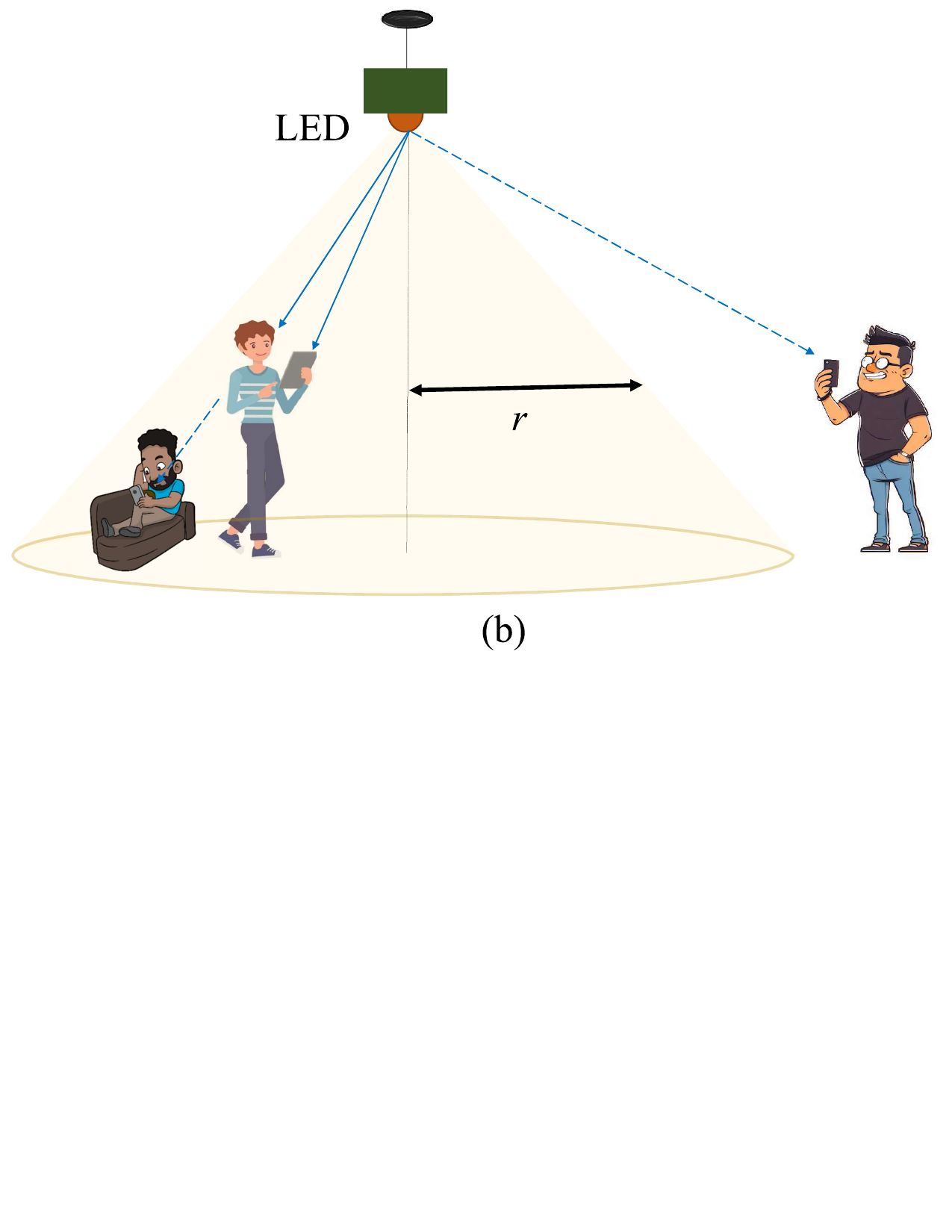}
\vspace{-4.5cm}
\end{minipage}
\caption{Sample illustration of the maximum range $r$ of: a) radio system with a mmWave/THz base station; b) optical wireless communication system using a VLC access point. In the former (latter) case, communication beyond $r$ is not possible due to signal strength would fall below the receiver sensitivity (the receiver would be placed outside the emission cone of the LED).}
\label{fig:range_definition}
\end{figure}
The problem of AP placement to ensure LoS coverage in a deterministic environment without obstacles (holes) and unlimited AP range is analogous to the \emph{art gallery problem} in computational geometry \cite{o1998computational}. The art gallery problem involves placing the minimum number of stationary, omni-directional camera guards required to cover all points within an art gallery of arbitrary layout.
By representing the walls of the art gallery as a layout with $n$ vertices, a single guard placed anywhere within a convex layout is sufficient for full coverage. This is because every point of the convex polygon is fully visible from any other point within it. Minimizing the number of camera guards required to cover a layout is generally an \mbox{NP-hard} problem. The \emph{Chvátal Theorem} provides an upper bound on the minimum number of guards needed. This theorem states that to fully cover any layout with $n$ vertices, $\lfloor n/3 \rfloor $ guards are sufficient and, in some cases, necessary. The theorem is proven by Fisk's proof, which demonstrates that the vertices of any layout are three-colorable \cite{fisk1978short}.
Alternatively, \emph{convex partitioning} methods determine guard placement by adding the smallest set of non-intersecting diagonals needed to partition the layout into the minimum number of convex sub-polygons \cite{chazelle1994decomposition}.
In this approach, placing a camera guard at the center of each convex sub-polygon ensures full visibility. However, these solutions to the art gallery problem are heuristic in nature, often deviating significantly from the optimal solution in complex layouts. Moreover, they are only applicable in deterministic environments without obstacles and assuming that the AP is not range constrained.

Unlike the assumptions made in the art gallery problem, real-world environments often include obstacles with stochastic spatial distributions or dynamic locations, which can block the LoS links between APs and users. Consider, for instance, a layout with a square-shaped obstacle positioned at two distinct locations, representing two realizations of the layout, as illustrated in Fig. \ref{fig:problem_statement}. Assuming that the APs are range unconstrained, Fig.\ref{fig:problem_statement}a) illustrates the first sample AP placement, shown by yellow triangles, that ensures LoS coverage for the first realization. Also, Fig. \ref{fig:problem_statement}b) depicts the second sample AP placement that guarantees LoS coverage for the first realization of the layout. However, in the second realization, Figures \ref{fig:problem_statement}c) and \ref{fig:problem_statement}d) demonstrate that both the first and second sample AP deployments fail to ensure seamless LoS coverage. As a result, understanding the spatial distribution or dynamics of obstacles is crucial for determining the optimal AP placement.

\begin{figure}[!t]
\centering
%\hspace{-1cm}
\advance\leftskip-1.3cm
\advance\rightskip-1cm
\includegraphics[width=8cm]{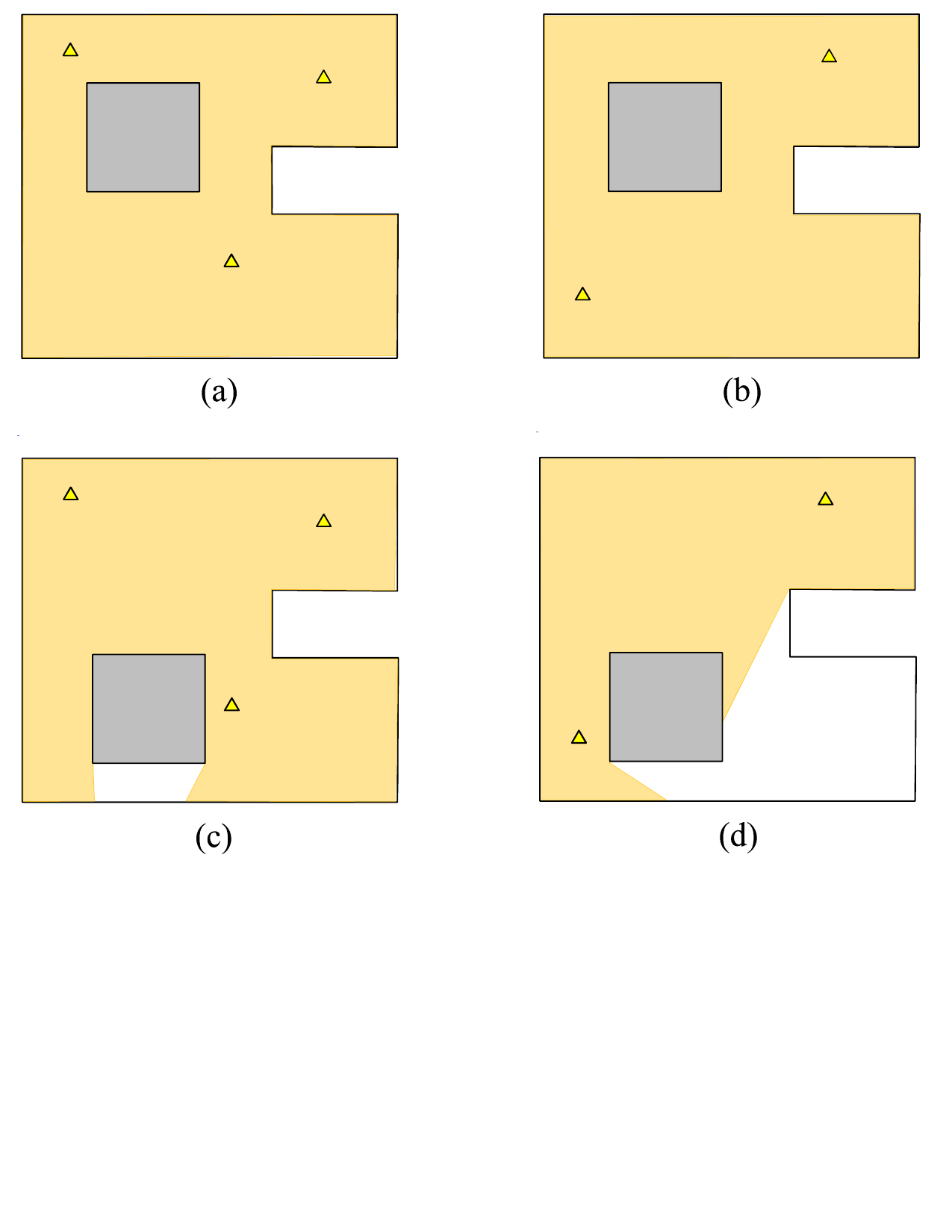} %{vtc_fig0.eps}
    \vspace{-3.3cm}\caption{ Access point placement (triangles) in a stochastic environment containing the example layout with $n=8$ vertices and a square-shaped obstacle in two location realizations. Ensuring LoS coverage in the first realization using (a) the first sample AP placement and (b) the second sample AP placement; Failure to achieve LoS coverage in the second realization using (c) the first sample AP placement and (d) the second sample AP placement. }\label{fig:problem_statement}
\vspace{-0.2cm}
\end{figure}

\begin{figure}[!t]
\centering
%\hspace{-1cm}
\advance\leftskip-1.3cm
\advance\rightskip-1cm
\includegraphics[width=8cm]{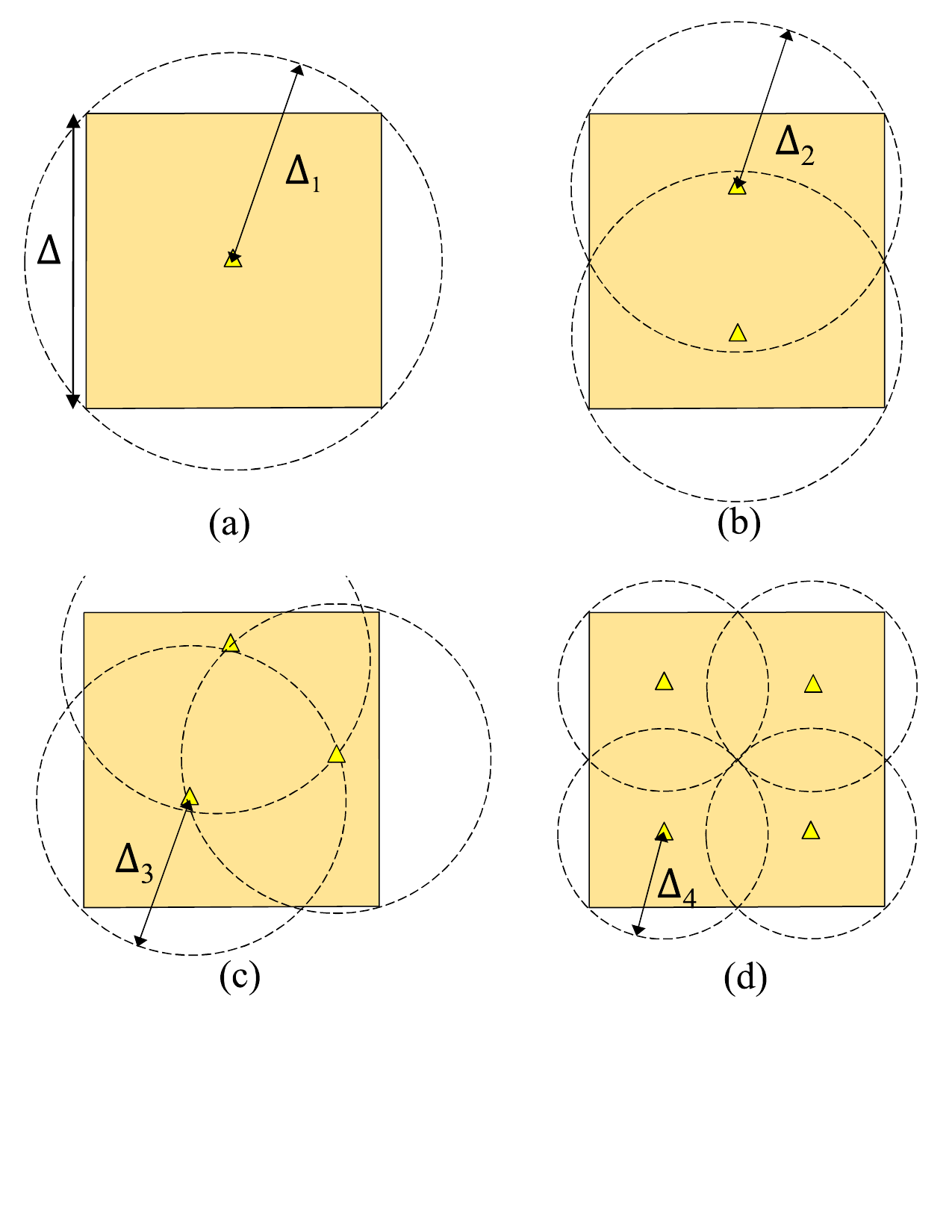} %{vtc_fig0.eps}
    \vspace{-2.2cm}\caption{ LoS coverage of the square-shaped layout with side length $\Delta$ by optimal deployment of: a) A single AP with range $r=\Delta_1=\sqrt{2}/2 \Delta$; b) two APs with range  $r=\Delta_2=\sqrt{5}/4 \Delta$; c) three APs with  range $r=\Delta_3= (\sqrt{6}/2-\sqrt{2}/2)\Delta$; and d) four APs with range $r=\Delta_4=\sqrt{2}/4\Delta$. }\label{fig:square-limited-range}
\vspace{-0.2cm}
\end{figure}

The limited range of wireless APs further complicates the task of optimal placement. In a square-shaped room with a side length of $\Delta$ and no obstacles, as shown in Fig.~\ref{fig:square-limited-range}(a), placing a single AP at the center with a range of $r=\sqrt{2}/2 \Delta$ or greater is sufficient to achieve LoS coverage. Additionally, as illustrated in Fig.~\ref{fig:square-limited-range}(b), deploying two APs with a range of  $\sqrt{5}/4 \Delta\leq r<\sqrt{2}/2 \Delta$ is sufficient to cover the entire area. Interestingly, Fig.~\ref{fig:square-limited-range}(c) shows that when the range satisfies $(\sqrt{6}/2-\sqrt{2}/2)\Delta \leq r < \sqrt{5}/4 \Delta$, three  APs are both necessary and sufficient to achieve LoS coverage.
Lastly, Fig.~\ref{fig:square-limited-range}(d) shows that four APs are needed when the AP range is limited to $\sqrt{2}/4\Delta \leq r< (\sqrt{6}/2-\sqrt{2}/2)\Delta$. 
As a result, since deploying APs to ensure LoS coverage in a square room is already challenging, it is reasonable to expect that irregular layouts will further complicate this task. 

Developing a deterministic solution that identifies the minimum number of APs and their precise locations to ensure LoS connectivity between APs and users throughout the environment remains an open challenge. Such a solution should be capable of accommodating any layout shape, spatial distribution of obstacles, and range configurations. Furthermore,  this solution approach should be adaptable to scenarios where a certain degree of LoS coverage gap is acceptable within the environment. The following sections provide a detailed, step-by-step description of this solution approach.

\vspace{0mm}
\section{Graph Modeling of Stochastic Environments }
\label{sec:3}

To lay the groundwork for the solution approaches in this paper, we begin by defining the concept of the visibility area. This definition is then used as a basis for constructing and analyzing our graph model.

 Given a point $P$, a sub-polygon $p$, and the maximum range $r>0$, we present the underlying definitions.

\begin{definition}\label{point_visibility.def}
  $\bf{Visibility~area~of~a~ 
 ~point}$ $\mathcal{V}_r(P)$ denotes the set of all the points $Y$ such that the line segment $PY$ lies entirely inside the layout without disruption, and $\|PX\|\leq r$.
 \end{definition}So, there is no layout  edge or obstacle between $P$ and any point in $\mathcal{V}_r(P)$ while $P$  lies within the distance $r$ from any point in $\mathcal{V}_r(P)$.

\begin{definition}\label{polygon_visibility.def}
 $\bf{Visibility~area~of~a~
 ~polygon}$ $\mathcal{V}_r(p)$ refers to the set of all the points $X$, such that $X\in\mathcal{V}_r(P_i)$ for $i=1, 2,..., l$, where $P_1, P_2, ..., P_l$ denote the vertices of $p$.
\end{definition}
From Definitions \ref{point_visibility.def} and \ref{polygon_visibility.def}, it follows immediately that  $\mathcal{V}_r(p)=\bigcap_{i=1}^l  \mathcal{V}_r(P_i)$. The definition of visibility area is visualized in Fig.~\ref{fig:visibility-definition} for the example environment and the AP range $r_0$. Given a point $P_1$ within the environment,  the yellow areas of Fig.~\ref{fig:visibility-definition}(a) illustrate $\mathcal{V}_{r=r_0}(P_1)$.  
While, the yellow area of Fig.~\ref{fig:visibility-definition}(b) represents $\mathcal{V}_{r=r_0}(p)$, wherein $p$ is a triangle with vertices $P_1$, $P_2$ and $P_3$. It can be observed here that
all vertices of $p$ are openly visible and lie within $r_0$ from any point in $\mathcal{V}_{r=r_0}(p)$. Similarly, we define the visibility area of a set of sub-polygons $c=[p_1,p_2,...,p_k]$ as $\mathcal{V}_{r}(c)=\bigcap_{i=1}^k  \mathcal{V}_{r}(p_i)$. This definition will later be utilized to determine the locations of the APs. 

\begin{figure}[!t]
\centering
%\hspace{-1cm}
\advance\leftskip-1.3cm
\advance\rightskip-1cm
\includegraphics[width=8cm]{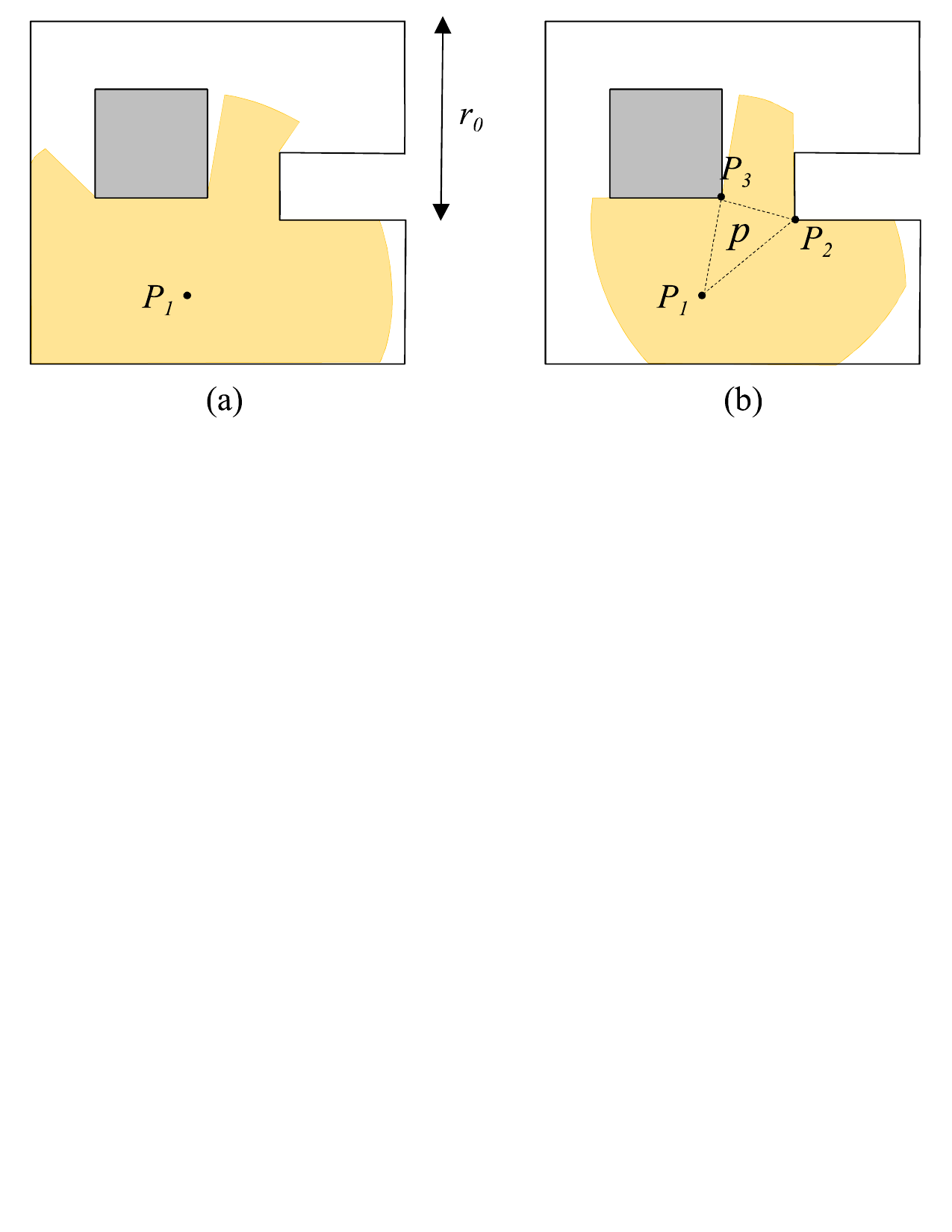} %{vtc_fig0.eps}
    \vspace{-7cm}\caption{ Definition of visibility area in the example layout with a square-shaped obstacle and AP range $r$; a)  visibility area of the point $P_1$, and b) visibility area of the triangle $p=\triangle P_1P_2P_3$. }\label{fig:visibility-definition}
\vspace{-0.1cm}
\end{figure}

\begin{figure}[!t]
\centering
%\hspace{-1cm}
\advance\leftskip-1cm
\advance\rightskip-1cm
\includegraphics[width=8cm]{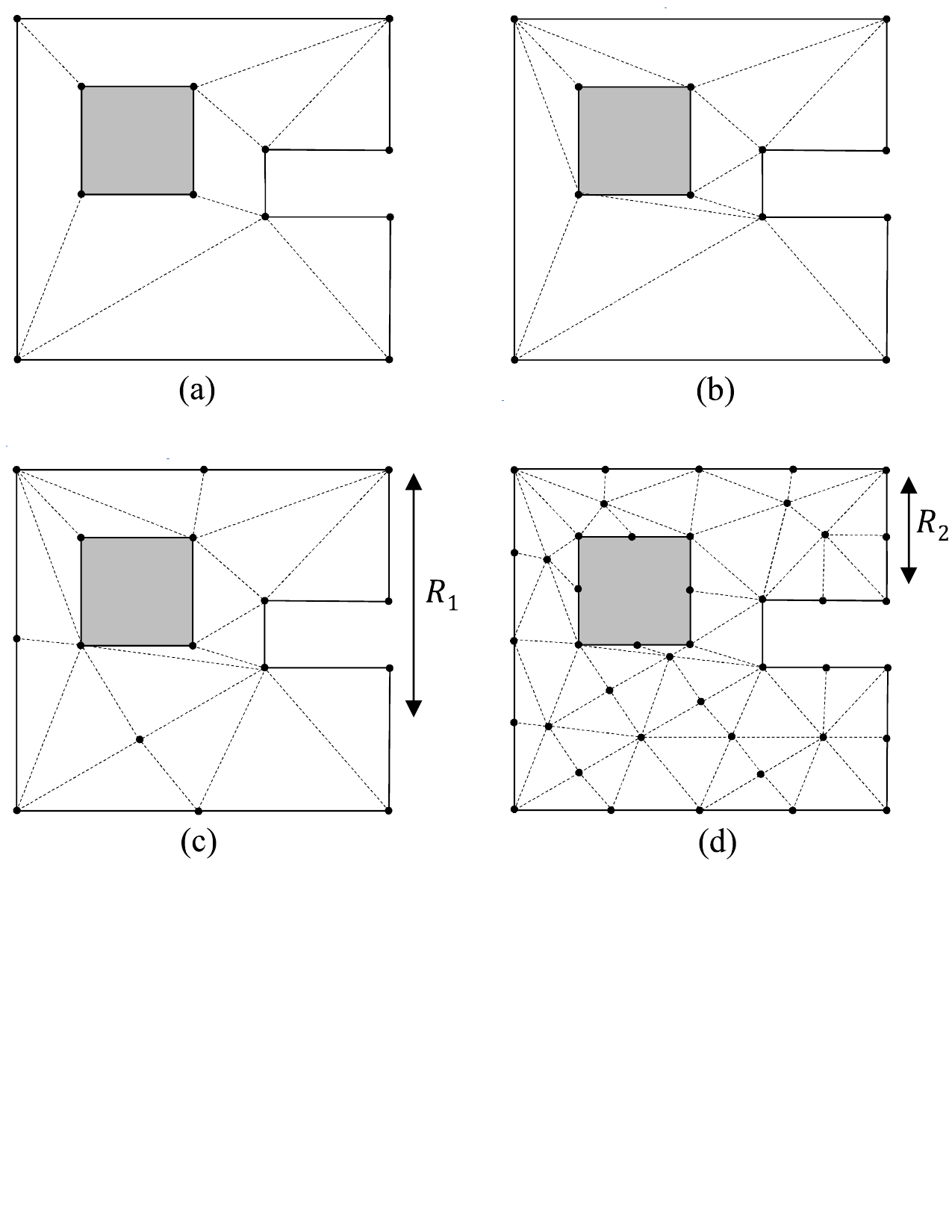} %{vtc_fig0.eps}
    \vspace{-3.4cm}\caption{Layout partitioning methods; a) Convex partitioning, b)   triangulation or hyper triangulation $\mathcal{HT}(R=\infty, \mathcal{L})$, c) hyper triangulation $\mathcal{HT}(R=R_1, \mathcal{L})$, and d)  and hyper triangulation $\mathcal{HT}(R=R_2, \mathcal{L})$.}\label{fig:hypertriangulation}
\vspace{-0.2cm}
\end{figure}

\vspace{0mm}
\begin{figure*}[!t]
\centering
%\hspace{-1cm}
\advance\leftskip-0.8cm
\advance\rightskip-1cm
\includegraphics[width=18.5cm]{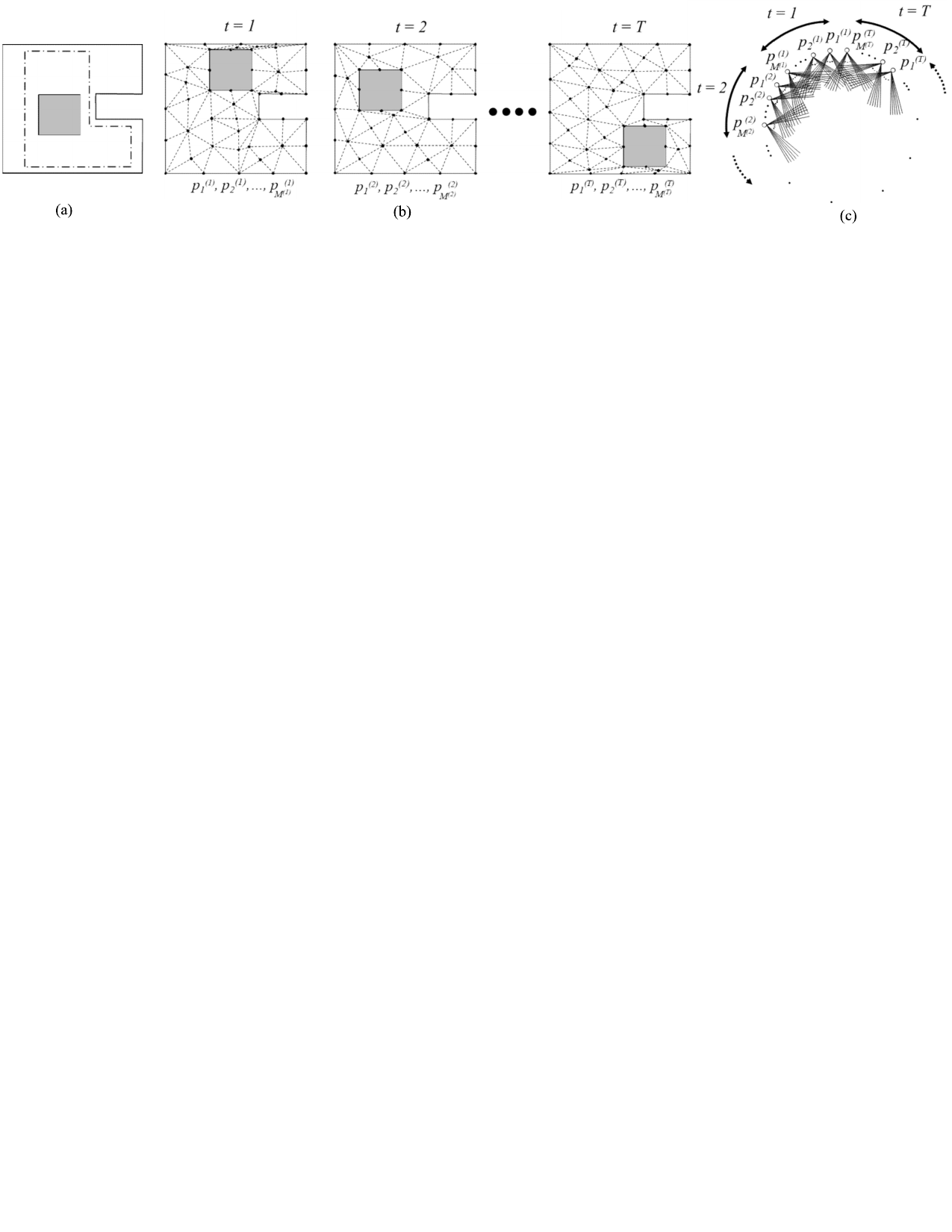} %{vtc_fig0.eps}
    \vspace{-19.7cm}\caption{ The process of creating the visibility graph; a) The example layout with a square-shaped obstacle of random location,
    b)  a set of $T$ realizations and the corresponding hyper-triangulation, and c) the resulting PV graph. }\label{fig:graph-large}
\vspace{-0.2cm}
\end{figure*}

\subsection{Pre-processing  the stochastic environment}

    Creating an equivalent graph model starts with partitioning the environment. The simplest partitioning method is convex partitioning, where the layout is divided into a set of convex sub-polygons by adding diagonals. Fig. \ref{fig:hypertriangulation}a) illustrates an example of convex partitioning, where the layout is divided into eight sub-polygons. Triangles are particularly appealing among all types of sub-polygons due to their inherent convexity and their ability to effectively partition any layout. Notably, any layout can be divided into a set of triangles through a process known as \emph{triangulation}~\cite{de1997computational}. Fig. \ref{fig:hypertriangulation}(b) represents an example of triangulation with $n=12$ triangles. However, it may be necessary to use smaller triangles depending on the network requirements or the size of the obstacles. 
 \begin{definition}\label{hyper-triangulation.def} $\bf{Hyper~Triangulation~method}$  $\mathcal{HT}(R, {\mathcal{L}})$ begins with triangulation of the environment $\mathcal{L}$ and proceeds by connecting the midpoint of the largest side of each triangle to its opposite vertex, until no triangle remains with a side larger than a target parameter $R$.
\end{definition}

Fig. \ref{fig:hypertriangulation}(c) and (d) 
 represent hyper-triangulation of the layout with target parameter set as $R=R_1$ and $R_2$, respectively, where $R_1>R_2$. It is observed here that all the triangle sides are smaller than $R_1$ in Fig. \ref{fig:hypertriangulation}(c) and $R_2$ in Fig. \ref{fig:hypertriangulation}(d). Therefore, as $R$ decreases in $\mathcal{HT}(R,\mathcal{L})$, the total number of triangles $M$ (the total number of nodes in the graph) increases with an approximate rate of $1/R^2$ for small values of $R$. Therefore, although a larger value of $R$ is preferred to reduce the computational complexity, it is necessary to establish several upper bounds to $R$. These bounds, derived through Lemmas \ref{lemma:non-empty-visibility}, \ref{lemma:Covering-visibility}, and \ref{Lemma:visibility_to_inside_triangle_with_obstacle}, serve as prerequisites for selection of $R$ to ensure the validity of our graph modeling.
 
\begin{lemma}
Let $p$ be a triangle in a hyper triangulation from the space $\mathcal{HT}(R\leq \sqrt{3}r,\mathcal{L})$. Then, $\mathcal{V}_r(p)\neq \emptyset$. \label{lemma:non-empty-visibility}
\end{lemma}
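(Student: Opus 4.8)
The goal is to show that if every side of the triangle $p$ has length at most $\sqrt{3}\,r$, then the three vertices of $p$ have a common point in their $r$-visibility regions, i.e. $\mathcal{V}_r(p)=\bigcap_{i=1}^{3}\mathcal{V}_r(P_i)\neq\emptyset$. My plan is to exhibit an explicit witness point that is simultaneously within distance $r$ of all three vertices \emph{and} openly visible from each of them. The natural candidate is a point inside $p$ itself — either the centroid, the incenter, or the circumcenter — since any point interior to the triangle $p$ is openly visible from each vertex of $p$ (the segment from a vertex to an interior point stays inside the triangle, and a hyper-triangulation triangle contains no obstacle or layout edge in its interior). Thus the visibility (line-of-sight) part of both definitions is automatic for any interior witness, and the whole lemma reduces to a metric claim: some well-chosen point of $p$ lies within distance $r$ of all three vertices.

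The key step is therefore to bound the relevant center-to-vertex distance in terms of the longest side. First I would recall that for a triangle with circumradius $\rho$ and sides $a,b,c$, the law of sines gives $2\rho = a/\sin A = b/\sin B = c/\sin C$, so $\rho \le \dfrac{\max(a,b,c)}{2\sin(\pi/3)} = \dfrac{\max(a,b,c)}{\sqrt 3}$ \emph{provided the triangle is acute or right}, because then the largest side subtends an angle at least $\pi/3$ (the angles sum to $\pi$, so the largest angle is at least $\pi/3$, and in a non-obtuse triangle the circumcenter lies inside or on the boundary of $p$). Under the hypothesis $\max(a,b,c)\le\sqrt 3\,r$ this yields $\rho\le r$, so the circumcenter is an interior (or boundary) point equidistant at distance $\rho\le r$ from all three vertices, giving the result. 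The remaining case is an \emph{obtuse} triangle, where the circumcenter falls outside $p$ and the above bound can fail; here I would instead take the witness to be the midpoint of the longest side, call it $M$, opposite the obtuse angle. Then $M$ is at distance $\tfrac12\max(a,b,c)\le \tfrac{\sqrt 3}{2}r < r$ from each of the two endpoints of the longest side, and — using that the foot of the altitude from the obtuse vertex lies strictly between those two endpoints — the distance from $M$ to the obtuse vertex is also at most half the longest side (the median to the longest side is the shortest median, and in fact in an obtuse triangle the median from the obtuse vertex is shorter than half the opposite side). So again all three vertex distances are below $r$, and $M$ lies on $p$, hence is visible from all vertices.

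I would then assemble these two cases into one statement: in every triangle there is a point of $p$ (circumcenter when non-obtuse, midpoint of the longest side when obtuse) whose distance to each vertex is at most $\max(a,b,c)/\sqrt 3 \le r$; since that point lies in the closed region $p$ and a hyper-triangulation triangle is convex and obstacle-free, it is openly visible from every vertex, so it belongs to $\mathcal{V}_r(P_i)$ for $i=1,2,3$ and hence to $\mathcal{V}_r(p)$, which is therefore nonempty.

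\textbf{Main obstacle.} The delicate point is the obtuse case — the clean circumradius bound $\rho\le \max(a,b,c)/\sqrt3$ is only valid for non-obtuse triangles, so one must argue separately that the median from the obtuse vertex to the longest side has length at most half that side (equivalently, that the obtuse vertex lies inside the circle with the longest side as diameter, which is exactly the definition of the angle being obtuse). I expect the bookkeeping of which ``center'' to pick, and verifying the median length inequality in the obtuse case, to be the part that needs the most care; the visibility half of the argument is essentially immediate from convexity of triangles in the hyper-triangulation.
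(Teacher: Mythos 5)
Your proof is correct. The paper does not prove this lemma inline (it defers to Lemma~1 of the cited prior work), but your two-case argument --- circumcenter for non-obtuse triangles via $\rho=\max(a,b,c)/(2\sin C)\le \max(a,b,c)/\sqrt{3}$, and the midpoint of the longest side for obtuse triangles via $m_c^2=(2a^2+2b^2-c^2)/4<c^2/4$ --- is exactly the standard smallest-enclosing-circle (Jung-type) bound that produces the constant $\sqrt{3}$ in the hypothesis, so it is essentially certain to be the intended proof; the visibility half follows, as you say, from the witness lying in the closed, obstacle-free, convex triangle $p$.
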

\begin{proof} See \cite[Lemma 1]{abedi2024indoor}.
\end{proof}Accordingly, to guarantee non-empty visibility areas for all the sub-polygons that partition the layout, the target parameter of hyper-triangulation must be set to $\sqrt{3}r$ or smaller.

\begin{lemma}
Let $p$ be a triangle in a hyper triangulation from $\mathcal{HT}(R\leq r, \mathcal{L})$. Then, $X\in\mathcal{V}_r(p)$ for any point $X$ inside $p$. \label{lemma:Covering-visibility}
\end{lemma}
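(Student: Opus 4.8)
The statement asserts that if $p$ is a triangle produced by hyper-triangulation with target parameter $R \leq r$, then every point $X$ lying inside $p$ belongs to $\mathcal{V}_r(p)$. By Definition~\ref{polygon_visibility.def} and the remark that $\mathcal{V}_r(p) = \bigcap_{i=1}^{l} \mathcal{V}_r(P_i)$, it suffices to show that $X \in \mathcal{V}_r(P_i)$ for each of the three vertices $P_1, P_2, P_3$ of $p$. Fixing a vertex $P_i$, I need to verify the two conditions in Definition~\ref{point_visibility.def}: (i) the segment $P_i X$ lies entirely inside the layout without disruption by any layout edge or obstacle, and (ii) $\|P_i X\| \leq r$.

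For condition (i), the key observation is that a triangle of a (hyper-)triangulation of $\mathcal{L}$ is a subset of $\mathcal{L}$ containing no layout edge or obstacle in its interior — this is the defining property of triangulation, and hyper-triangulation only subdivides existing triangles, so it is preserved. Since $p$ is convex and both $P_i$ (a vertex of $p$) and $X$ (a point inside $p$) lie in $p$, the segment $P_i X$ is contained in $p \subseteq \mathcal{L}$ and therefore meets no obstacle and crosses no layout boundary; hence $X$ is "openly visible" from $P_i$. For condition (ii), I would bound $\|P_i X\|$ by the diameter of the triangle $p$. The diameter of a triangle equals the length of its longest side, and the hyper-triangulation procedure terminates precisely when no triangle has a side exceeding $R$; thus every side of $p$ has length at most $R \leq r$. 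Since $X$ lies in $p$, the distance from $X$ to any vertex $P_i$ is at most the largest distance between two points of $p$, which is the longest side length, hence $\|P_i X\| \leq R \leq r$. Combining (i) and (ii) gives $X \in \mathcal{V}_r(P_i)$ for each $i$, and intersecting over $i=1,2,3$ yields $X \in \mathcal{V}_r(p)$.

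The only step requiring a little care — and the one I would flag as the main (minor) obstacle — is the geometric fact that the farthest point of a triangle from any of its vertices is attained at another vertex, so that $\max_{X \in p}\|P_i X\|$ equals the length of the longest side incident to $P_i$, and a fortiori is at most the longest side of $p$. This follows because $\|P_i \cdot\|$ is a convex function, so its maximum over the convex hull $p$ of $\{P_1,P_2,P_3\}$ is attained at an extreme point, i.e.\ at a vertex; the value there is a side length (or $0$). This is elementary but is the one place where the bound $R \leq r$ is actually used, as opposed to the weaker $R \leq \sqrt{3}r$ of Lemma~\ref{lemma:non-empty-visibility}. Everything else is a direct unwinding of the definitions.
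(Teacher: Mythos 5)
Your proof is correct and follows the natural route: convexity of the triangle gives unobstructed visibility of every interior point from each vertex (the segment $P_iX$ stays inside $p\subseteq\mathcal{L}$, which by construction contains no obstacle or layout edge), and the termination condition of the hyper-triangulation bounds every side, hence the diameter, of $p$ by $R\leq r$, which is precisely where the hypothesis $R\leq r$ rather than the weaker $R\leq\sqrt{3}r$ of Lemma~\ref{lemma:non-empty-visibility} is used. The paper does not reproduce its own proof here --- it defers to \cite[Lemma 2]{abedi2024indoor} --- but your argument, including the convexity-of-the-distance-function justification for the diameter claim, is the standard one and I see no gap in it.
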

\begin{proof} See \cite[Lemma 2]{abedi2024indoor}.
\end{proof}

From Lemma \ref{lemma:Covering-visibility}, setting $R
$ in hyper-triangulation to any value smaller than $r$ guarantees that the visibility area of each sub-polygon at least covers all the points within the sub-polygon. 

\begin{lemma}
Let $p$ be a triangle in $\mathcal{HT}(R,\mathcal{L})$, where $\mathcal{L}$ represents a layout without obstacles or a set of $b$ obstacles satisfying $\delta_{\text{min}}=\text{min}(\delta_1, \delta_2,....,\delta_b)\geq \dfrac{\sqrt{3}}{6}r$, with $\delta_i$ denoting the inradius of the convex hull (IRCH) of the $i^\text{th}$ obstacle. Then, for any point $X$ within the visibility area of $p$, it can be concluded that $X$ also lies within the visibility area of any point inside $p$, regardless of the value of $R$.
%Let $p$ be a triangle in $\mathcal{HT}(R,\mathcal{L})$, where $\mathcal{L}$ denotes a layout with no obstacle or a set of $b$ obstacles that satisfy $\delta_{\text{min}}=\text{min}(\delta_1, \delta_2,....,\delta_b)\geq \dfrac{\sqrt{3}}{6}r$, where $\delta_i$denotes the Inradius of  Convex Hull (IRCH) of the $i^{th}$ obstacle. Then, for any point $X$ that lies within the visibility area of $p$,  we conclude that $X$ lies within the visibility area of any point inside $p$, regardless of the value of $R$. %In other words,  $X\in \mathcal{V}_r(p)\Rightarrow  X\in \mathcal{V}_r(Q), ~\forall~ Q\in~p\in \mathcal{HT}(R,\mathcal{L}) $.
\label{Lemma:visibility_to_inside_triangle_no_obstacle}
\end{lemma}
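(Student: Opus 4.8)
The plan is to check the two conditions defining $X\in\mathcal V_r(Q)$ for an arbitrary point $Q$ inside $p$: the range bound $\|X-Q\|\le r$, and the unobstructedness of the segment $[X,Q]$. The range bound is immediate: writing $Q=\sum_k\lambda_kP_k$ as a convex combination of the vertices $P_1,\dots,P_l$ of $p$ and using that $X\in\mathcal V_r(p)=\bigcap_k\mathcal V_r(P_k)$ forces $\|X-P_k\|\le r$ for every $k$, the triangle inequality yields $\|X-Q\|=\bigl\|\sum_k\lambda_k(X-P_k)\bigr\|\le\max_k\|X-P_k\|\le r$. So the work is entirely in the line-of-sight condition.

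For the line-of-sight condition I would first reduce to boundary points of $p$. Because $p$ is a cell of a (hyper-)triangulation of $\mathcal L$, it satisfies $p\subseteq\mathcal L$ and meets no obstacle interior, so every segment contained in $p$ is unobstructed; in particular we are done if $X\in p$. If $X\notin p$, convexity of $p$ shows that $[X,Q]\cap p$ is a single closed sub-segment with endpoint $Q$, so $[X,Q]=[X,A]\cup[A,Q]$, where $A$ is the first point of $[X,Q]$ on $\partial p$ and $[A,Q]\subseteq p$ is already clear. Hence it suffices to prove: for every point $A$ lying on an edge $P_iP_j$ of $p$, the segment $[X,A]$ is unobstructed; the case $A\in\{P_i,P_j\}$ is covered directly by $X\in\mathcal V_r(P_i)\cap\mathcal V_r(P_j)$.

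Fix such an $A$ and introduce the auxiliary triangle $T=\triangle XP_iP_j$. Its boundary $\partial T$ is entirely clear: the sides $XP_i$ and $XP_j$ because $X\in\mathcal V_r(P_i)\cap\mathcal V_r(P_j)$, and the side $P_iP_j$ because it is an edge of the triangulation cell $p$; note also that each side has length at most $r$. Suppose $[X,A]\subseteq T$ were blocked by some point $y$. If $y$ lay in the exterior of the outer layout polygon, a path from $y$ to infinity staying outside $\mathcal L$ would have to cross $\partial T\subseteq\mathcal L$, which is impossible; so $y$ must belong to some obstacle $O$. Then $O$ meets $\operatorname{int}(T)$ but, being connected and disjoint from the clear boundary $\partial T$, it satisfies $O\subseteq\operatorname{int}(T)$, whence $\operatorname{conv}(O)\subseteq T$. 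Consequently $\delta_i=\mathrm{IRCH}(O)$ is at most the inradius of $T$; but a triangle with all sides at most $r$ has inradius at most that of the equilateral triangle of side $r$, namely $\tfrac{\sqrt3}{6}r$, and since $\operatorname{conv}(O)$ sits strictly inside $T$ the inequality is strict, giving $\delta_i<\tfrac{\sqrt3}{6}r$ and contradicting $\delta_{\min}\ge\tfrac{\sqrt3}{6}r$. Therefore $[X,A]$ is unobstructed; hence $[X,Q]$ is unobstructed, and together with $\|X-Q\|\le r$ this gives $X\in\mathcal V_r(Q)$. When $\mathcal L$ has no obstacles the argument is the same with the obstacle subcase vacuous: $\partial T\subseteq\mathcal L$ forces $T\subseteq\mathcal L$ because the exterior of a simple polygon is connected, so $[X,A]$ stays inside $\mathcal L$.

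The step I expect to be the crux is the geometric heart of the obstacle case: deriving $O\subseteq\operatorname{int}(T)$ cleanly from the blockage (which requires fixing the exact notion of ``blocked,'' invoking connectedness of $O$, and treating separately the cases where $A$ is a vertex of $p$ or where $X,P_i,P_j$ are collinear), together with the extremal-inradius fact — that among triangles with all sides at most $r$ the equilateral one of side $r$ maximizes the inradius — and the disposal of the borderline equality case, where $T$ would have to be that extremal triangle and $\operatorname{conv}(O)$ its incircle, which is incompatible with $O$ avoiding the sides $XP_i$ and $XP_j$. The remaining bookkeeping — that $\partial T\subseteq\mathcal L$, that $[A,Q]\subseteq p$, and the degenerate positions of $A$ — is routine.
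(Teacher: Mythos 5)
Your overall strategy is clearly the intended one: the threshold $\tfrac{\sqrt{3}}{6}r$ is exactly the inradius of an equilateral triangle of side $r$, and the reduction to ``any blocking obstacle must have its convex hull trapped inside the auxiliary triangle $T=\triangle XP_iP_j$, whose inradius is too small to admit a disk of radius $\delta_{\min}$'' is precisely the argument that constant is built for. The range bound via convex combinations, the reduction to the entry point $A$ on an edge $P_iP_j$, the connectedness argument forcing $O\subseteq\operatorname{int}(T)$, the extremal fact for triangles with all sides at most $r$ (e.g.\ via $\mathrm{Area}\le s^2/(3\sqrt{3})$ and $s\le 3r/2$), and your flagged borderline-equality case are all sound and can be closed as you indicate.

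There is, however, one genuine gap, and it sits exactly on the clause ``regardless of the value of $R$.'' You assert that each side of $T$ has length at most $r$. That holds for $XP_i$ and $XP_j$ by the range condition in Definition~\ref{point_visibility.def}, but the third side $P_iP_j$ is an edge of the hyper-triangulation and is bounded only by $R$; nothing in the lemma's hypotheses caps it by $r$. With only two sides constrained, the extremal inradius is no longer $\tfrac{\sqrt{3}}{6}r\approx 0.289\,r$: take $T$ isosceles with legs $r$ and apex angle $\theta$ at $X$ with $\sin(\theta/2)=\tfrac{\sqrt{5}-1}{2}$ (base $\approx 1.24\,r$), whose inradius $\tfrac{r\sin(\theta/2)\cos(\theta/2)}{1+\sin(\theta/2)}\approx 0.300\,r$ exceeds $\tfrac{\sqrt{3}}{6}r$. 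An obstacle with IRCH between these two values could then sit strictly inside such a $T$, blocking $[X,A]$ while leaving $XP_i$, $XP_j$ and $P_iP_j$ clear. So your proof as written establishes the lemma only under the additional hypothesis $R\le r$ --- which the paper does impose elsewhere through Lemma~\ref{lemma:Covering-visibility} and the upper-bound step of Algorithm~\ref{tabel:PV-graph creation}, but which contradicts the lemma's own ``regardless of $R$'' phrasing. Either state and use $R\le r$ explicitly to bound $\|P_iP_j\|$, or redo the extremal computation with only two sides constrained, in which case the threshold $\tfrac{\sqrt{3}}{6}r$ is no longer sufficient.
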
 \begin{proof} See Appendix A.
\end{proof} Lemma \ref{Lemma:visibility_to_inside_triangle_no_obstacle} states that in a layout without obstacles or with relatively large obstacles, visibility to the vertices of a sub-polygon in a hyper-triangulation guarantees visibility to all points within the sub-polygon, regardless of the value of $R$. In this case, we only need to follow the upper bounds for $R$ established by Lemmas \ref{lemma:non-empty-visibility} and \ref{lemma:Covering-visibility}.
However, in case the environment contains relatively small obstacles,  setting a new upper-bound to $R$ is necessary.

\begin{lemma}
Let $p$ be a triangle in $\mathcal{HT}(R\leq 2x^*, \mathcal{L})$ and $x^*$ be the smallest real positive root of the cubic equation $f(x)=x^3-rx^2+\delta_{\text{min}}^2x+\delta_{\text{min}}^2r$, in which  $\delta_{\text{min}}=\text{min}(\delta_1, \delta_2,....,\delta_b)\leq  \dfrac{\sqrt{3}}{6}r$, where $\delta_i$
denotes the  IRCH of the $i^{th}$ obstacle in $\mathcal{L}$. Then, for any point $X$ within the visibility area of $p$, we conclude that $X$ lies within the visibility area of any point inside $p$. In other words, $X\in \mathcal{V}_r(p), p\in \mathcal{HT}(R\leq 2x^*, \mathcal{L}) \implies X\in \mathcal{V}_r(Q), ~\forall~ Q\in p$. \label{Lemma:visibility_to_inside_triangle_with_obstacle}
\end{lemma}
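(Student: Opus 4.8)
The plan is to argue by contradiction: assume $X\in\mathcal{V}_r(p)$ but $X\notin\mathcal{V}_r(Q_0)$ for some point $Q_0$ of $p$, and show this forces the longest side of $p$ to be strictly larger than $2x^*$, contradicting $p\in\mathcal{HT}(R\le 2x^*,\mathcal{L})$. Write $P_1,P_2,P_3$ for the vertices of $p$. The first step is to dispose of the range part: since $Q_0=\sum_i\mu_iP_i$ with $\mu_i\ge 0$, $\sum_i\mu_i=1$, and $Y\mapsto\|XY\|$ is convex, $\|XQ_0\|=\bigl\|\sum_i\mu_i(X-P_i)\bigr\|\le\max_i\|XP_i\|\le r$. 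Hence $X\notin\mathcal{V}_r(Q_0)$ can only be caused by an occlusion: some obstacle $O$ meets the open segment $\overline{XQ_0}$, while $O$ is disjoint from each $\overline{XP_i}$ because $X$ sees the three vertices. As in the proof of Lemma~\ref{Lemma:visibility_to_inside_triangle_no_obstacle}, it is enough to take $O$ convex (replacing it by its convex hull only strengthens ``$O$ blocks $\overline{XQ_0}$''), and then $O$ contains a disk of radius $\delta(O)\ge\delta_{\min}$.

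Next I analyze the occlusion geometrically. Let $S$ be the shadow sector of $O$ seen from $X$, i.e. the angular sector bounded by the two tangent rays from $X$ to $O$; it is convex because $X\notin O$. Since $\overline{XQ_0}$ is occluded, $Q_0\in S$ and $Q_0$ lies strictly beyond $O$ along the ray $XQ_0$; since each $\overline{XP_i}$ is clear, each vertex $P_i$ either lies outside $S$ or lies strictly between $X$ and $O$ on the ray $XP_i$. Because $Q_0\in p\cap S$ while no vertex of $p$ is simultaneously inside $S$ and beyond $O$, the triangle $p$ must ``straddle'' $S$ (two of its edges cross the bounding rays of $S$) and must reach past $O$ into the shadow.

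The quantitative heart is an extremization: over all admissible configurations --- positions of $X$, of the obstacle (which, by a monotonicity argument, may be taken to be a disk of radius $\delta_{\min}$), and of $P_1,P_2,P_3$ subject to $\|XP_i\|\le r$ --- find the one minimizing the longest side of $p$. I expect the extremal configuration to have two vertices, say $P_1,P_2$, at distance $r$ from $X$ with the obstacle inscribed in the cone $\angle P_1XP_2=:2\beta$, and the third vertex placed just in front of the obstacle so that $\mathrm{int}\,p$ pokes past it. There the ``reach past $O$'' condition is $r\cos\beta>\delta_{\min}(1+\sin\beta)/\sin\beta$ and the longest side equals $|P_1P_2|=2r\sin\beta$; writing the longest side as $2x$ and saturating the reach condition yields $\delta_{\min}^{2}(r+x)=x^{2}(r-x)$, i.e. exactly $f(x)=x^{3}-rx^{2}+\delta_{\min}^{2}x+\delta_{\min}^{2}r=0$. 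The assumption $\delta_{\min}\le\tfrac{\sqrt3}{6}r$ is what guarantees that $f$ has a positive root, so $x^*$ is well defined (the complementary regime being Lemma~\ref{Lemma:visibility_to_inside_triangle_no_obstacle}); and since $x^*$ is the smallest positive root, it corresponds to the smaller admissible angle $\beta$, so every genuine occluding configuration has longest side strictly above $2x^*$. This contradicts $p\in\mathcal{HT}(R\le 2x^*,\mathcal{L})$, giving $X\in\mathcal{V}_r(Q)$ for all $Q\in p$.

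The main obstacle is the extremization: rigorously proving that the configuration above is the worst case and dominates every sub-case --- a vertex escaping $S$ sideways versus escaping it by being in front of $O$; which edge of $p$ is the longest; the boundary case $\|XP_i\|<r$; and the reduction of an arbitrary convex obstacle to an inscribed $\delta_{\min}$-disk, which needs a monotonicity statement to the effect that shrinking the obstacle toward such a disk only makes the occluding configuration easier, hence forces a larger triangle. Once the extremal configuration is identified, turning its tangency and reach conditions into $f$ and verifying that $2x^*$ is an infimum (not merely a critical value) of the longest side over occluding configurations is routine algebra.
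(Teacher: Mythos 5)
Your skeleton is sound and you have correctly reverse\-/engineered where the cubic comes from: your saturated configuration (a $\delta_{\text{min}}$-disk tangent to both legs of an isosceles triangle with apex $X$, legs of length $r$ and base $2x$, and touching the base) is exactly the incircle configuration of that triangle, and your reach condition $r\cos\beta=\delta_{\text{min}}(1+\sin\beta)/\sin\beta$ with $x=r\sin\beta$ is algebraically identical to setting the inradius $x\sqrt{(r-x)/(r+x)}$ equal to $\delta_{\text{min}}$, which is precisely $f(x)=0$. However, as you concede yourself, the proof is not actually there: the entire quantitative content of the lemma is the claim that this configuration is extremal over all occluding configurations, and you leave both the extremization and the reduction from an arbitrary obstacle to a $\delta_{\text{min}}$-disk inscribed in the viewing cone as unproven. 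The convex-hull step in particular cannot be waved through as stated: $\mathrm{conv}(O)$ still blocks $\overline{XQ_0}$, but it may now intersect some $\overline{XP_i}$, which invalidates the shadow-sector picture you build on top of it.

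Both gaps close at once with a trapping observation that your sketch is missing. The segments $\overline{XP_1},\overline{XP_2},\overline{XP_3}$ together with the sides of $p$ cut $\mathrm{conv}(\{X\}\cup p)\setminus p$ into open triangles $\triangle XAB$, where $A,B$ are either vertices of $p$ or points lying on some $\overline{XP_i}$, so that $\|XA\|,\|XB\|\le r$ (your convexity-of-the-norm argument again) and $\overline{AB}$ lies on a side of $p$, giving $\|AB\|=2x\le R\le 2x^*$. A connected obstacle that meets $\overline{XQ_0}$ while avoiding $p$ and the three vertex segments cannot touch the boundary of the open triangle it enters, hence is confined to it; by convexity so is its convex hull, and therefore that triangle's inradius strictly exceeds $\delta_{\text{min}}$. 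The extremization then becomes a two-line inequality instead of a case analysis: by Heron, $r_{\mathrm{in}}^2=\frac{\left(c^2-(a-b)^2\right)(a+b-c)}{4(a+b+c)}\le\frac{c^2(a+b-c)}{4(a+b+c)}$, and since $(\sigma-c)/(\sigma+c)$ increases in $\sigma=a+b\le 2r$, taking $c=2x$ yields $r_{\mathrm{in}}^2\le x^2\frac{r-x}{r+x}$. Finally $f(x)=(r+x)\bigl(\delta_{\text{min}}^2-x^2\tfrac{r-x}{r+x}\bigr)\ge 0$ for all $0\le x\le x^*$ because $f(0)=\delta_{\text{min}}^2 r>0$ and $x^*$ is the first positive root, so $r_{\mathrm{in}}\le\delta_{\text{min}}$ --- the desired contradiction, obtained without deciding which side of $p$ is longest, which vertex escapes the shadow sector, or why the isosceles tangent configuration dominates. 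Until something of this kind is supplied, your argument is a correct guess of the answer rather than a proof.
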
 
\begin{proof} See Appendix B.
\end{proof}
From Lemma \ref{Lemma:visibility_to_inside_triangle_with_obstacle}, the upper-bound to the target parameter $R$ of the hyper-triangulation is determined by $r$ as well as the shape and size of the smallest obstacles in the layout. Further, the obstacle with the smallest IRCH results in the smallest upper bound to $R$. On the other hand, as $r$ grows large, the value of $x^*$ decreases asymptotically to  $\delta_{\text{min}}$. This implies that for an unlimited $r$,  $\delta_{\text{min}}$ sets the upper-bound to $R$. Finally, any hyper-triangulation with a parameter $R$ that satisfies the upper bounds established by Lemmas \ref{lemma:non-empty-visibility}, \ref{lemma:Covering-visibility}, and \ref{Lemma:visibility_to_inside_triangle_with_obstacle} can generate a valid graph that correctly represents the environment.

\subsection{Graph construction}

Assume that a total of $T\geq 1$ realizations are used to represent the given stochastic environment. Let $p_i^{(t)}$ be the $i^{\text{th}}$ triangle of $t^{\text{th}}$ realization, wherein $i=1,2,...,M^{(t)}$ and $t=1, 2,..., T$. Here, $M^{(t)}$ denotes the total number of triangles that partitions the $t^{\text{th}}$ realization of the layout, i.e., $\mathcal{L}^{(t)}$, using a hyper-triangulation $\mathcal{HT}(R,\mathcal{L}^{(t)})$, with $R$ chosen in accordance with the upper bounds outlined in Lemmas
\ref{lemma:non-empty-visibility}, \ref{Lemma:visibility_to_inside_triangle_no_obstacle}, and \ref{Lemma:visibility_to_inside_triangle_with_obstacle}. This choice of $R$ is regarded as valid for all realizations, assuming that the positions of the obstacles may vary while their shapes and sizes remain constant. Using Definitions \ref{point_visibility.def} and \ref{polygon_visibility.def}, the graph model is defined as follows.
\begin{definition}$\hspace{-0.1cm}\bf{Partition\hspace{-0.1cm}-\hspace{-0.1cm}based~Visibility~(PV)~ graph}$ refers to a simple unweighted graph whose nodes represent the sub-polygons (triangles) $p_i^{(t)}$ for $1 \leq t \leq T$ and $1 \leq i \leq M^{(t)}$. Two different nodes  $p_i^{(t)}$ and $p_j^{(t')}$ in this graph are adjacent if and only if $\mathcal{V}_r(p_i^{(t)})\cap \mathcal{V}_r(p_j^{(t')})\neq \emptyset$. %for $1\leq t,t' \leq T$, $1 \leq i \leq M^{(t)}$, and $1 \leq j \leq M^{(t')}$.
\label{definition:PV-graph}
\end{definition}
Algorithm \ref{tabel:PV-graph creation} illustrates the steps of creating a PV graph based on Definition \ref{definition:PV-graph}. The structure of the PV graph is determined by the layout and obstacle shapes, the statistical distribution of obstacle locations, the AP range $r$, and the selected value of $R$.  
  Fig. \ref{fig:graph-large} illustrates the schematic process of creating a PV graph in an example stochastic environment.
Fig. \ref{fig:graph-large}(a) shows the example layout with a square-shaped obstacle located randomly within the area limited by the dash-dotted line. Fig. \ref{fig:graph-large}(b) illustrates the set of $T$ realizations of the environment and their corresponding hyper-triangulation. Finally, Fig. \ref{fig:graph-large}(c) depicts the resulting nodes of the PV graph that represents the sub-polygons (triangles) of the layout's realizations. Therefore, the size of the PV graph can be calculated as $\sum_{t=1}^TM^{(t)}$.

\setcounter{table}{0} 
\renewcommand{\tablename}{Algorithm}
\begin{table}[!t]
\caption{\small{Creating the PV graph (Definition \ref{definition:PV-graph} )}}\label{tabel:PV-graph creation}
\vspace{-0.3cm}
\begin{center}
\hspace{0cm}\begin{tabular}{ l | l}\hline\hline

{\footnotesize{~}}&
{\footnotesize{{\footnotesize{\textbf{Inputs:} }}}} \\

{\footnotesize{~}}&
{\footnotesize{{\footnotesize{~$r$ ~~~~~~~~~~~~~~~~~~~~~~~$\%$ The range of APs }}}} \\

{\footnotesize{~}}&
{\footnotesize{{\footnotesize{~$\nabla_i$; $i=1,2,...b$ ~~~~~$\%$ The 2D shape of the $i^{\text{th}}$ obstacle}}}} \\

{\footnotesize{~}}&
{\footnotesize{{\footnotesize{~$T$~~~~~~~~~~~~~~~~~~~~~~~ $\%$ The  total number of realizations}}}} \\

{\footnotesize{~}}&
{\footnotesize{{\footnotesize{~$\mathcal{L}^{(t)}$; $t=1,2,...T$ ~~$\%$ The $t^\text{th}$ realization of the layout}}}} \\

{\footnotesize{~}}&
{\footnotesize{{\footnotesize{\textbf{Functions:} }}}} \\

{\footnotesize{~}}&
{\footnotesize{{\footnotesize{$\mathcal{V}_r(p)$~~~~~~~~~~~~~~$\%$~Returns the visibility area of sub-polygon $p$}}}} \\

{\footnotesize{~}}&
{\footnotesize{{\footnotesize{$\mathbb{IRCH}(\nabla)$~~~~~~~~~ $\%$~Returns the IRCH of the obstacle $\nabla$}}}} \\

{\footnotesize{~}}&
{\footnotesize{{\footnotesize{$\mathbb{UCal}(\delta_1, \delta_2,..)$~~ $\%$~Upperbound Calculation via Lemmas \ref{lemma:non-empty-visibility}, \ref{lemma:Covering-visibility}, \ref{Lemma:visibility_to_inside_triangle_with_obstacle}}}}}

\\
{\footnotesize{~}}&{\footnotesize{{\footnotesize{$\mathbb{Conn}\big(p,q\big)$  ~~~~~~ $\%$~Connecting the nodes $p$ and $q$ in $\mathcal{G}$ }}}} \\

{\footnotesize{~}}&{\footnotesize{{\footnotesize{~~~~~~~~~~~~~~~~~~~~~~$\%$ if $\mathcal{V}_r(p)\cap \mathcal{V}_r(q)\neq \emptyset$ }}}} \\

{\footnotesize{~}}&
{\footnotesize{{\footnotesize{$\mathcal{HT}(.)$~~~~~~~~~~~~~ $\%$~Hyper triangulation, Definition \ref{hyper-triangulation.def} }}}}

\\

\hline

{\footnotesize{1}}&
{\footnotesize{{\footnotesize{{$\delta_i\longleftarrow \mathbb{IRCH}(\nabla_i)$; $i=1,2,...b$ } }}}} \\

{\footnotesize{2}}&
{\footnotesize{{\footnotesize{{$R'\longleftarrow \mathbb{UCal}(r,\delta_1,...\delta_b)$}  }}}} \\

{\footnotesize{3}}&
{\footnotesize{{\footnotesize{$p_i^{(t)}\longleftarrow \mathcal{HT}(R<R',\mathcal{L}^{(t)})$}; $i=1,2,..., M^{(t)}$; $t=1,2,..., T$   }}} \\

{~}&
{\footnotesize{{\footnotesize{~~~~~~~~~~~~~~~~~~~~~~~~~~~~~~~~$\%$ Creating the nodes of $\mathcal{G}$}}}} \\

{\footnotesize{4}}&{\footnotesize{{\footnotesize{$\mathbb{Conn}\big(p_i^{(t)}, p_j^{(t')} \big)$;  $ t=1,2,..., T$; $t'=1,2,..., T$;  }}}} \\

   {\footnotesize{~}}&
{\footnotesize{{\footnotesize{~~~~~~~~~~~~~~~~~~~~~~~~~ $i=1,2,..., M^{(t)}$; $j=1,2,..., M^{(t')}$  }}}} \\

\hline

   {\footnotesize{~}}&
{\footnotesize{{\footnotesize{\textbf{Output:} }}}} \\

   {\footnotesize{~}}&
{\footnotesize{{\footnotesize{$\mathcal{G}$ $~~~~~~~~~~~~~~~~~\%$ The PV graph }}}} \\

\hline
\hline

\end{tabular}
\end{center}
\vspace{-0.1cm}
\end{table}

Fig. \ref{fig:graph-modeling-example} illustrates an example of creating a PV graph via Algorithm \ref{tabel:PV-graph creation} for a stochastic environment and the maximum range $r=\infty$. Fig. \ref{fig:graph-modeling-example}a) shows the example layout and the square-shaped obstacle located anywhere within the dash-dotted area. Setting the number of realizations $T=2$, the obstacle is shown in two different locations as in Fig. \ref{fig:graph-modeling-example}b) and \ref{fig:graph-modeling-example}c). By using the convex partitioning method, the layout at each realization is partitioned into $M^{(1)}=M^{(2)}=8$ sub-polygons which are reflected as $16$ nodes of the resulting PV graph in Fig. \ref{fig:graph-modeling-example}d)\footnote{To simplify the graph for presentation purposes, we used convex partitioning rather than a hyper-triangulation.}. Then, by creating the visibility area of each sub-polygon, we determine the edges of the PV graph.  For example, since $\mathcal{V}_\infty (p_1^{(2)})\cap \mathcal{V}_\infty (p_4^{(1)})\neq \emptyset$ and $\mathcal{V}_\infty (p_1^{(1)})\cap \mathcal{V}_\infty (p_4^{(1)})= \emptyset$, the nodes $p_1^{(2)}$ and $p_4^{(1)}$ are adjacent while the nodes $p_1^{(1)}$ and $p_4^{(1)}$ are not adjacent in the PV graph.

\section{Graph Processing for Optimal Access Point Deployment }
\label{sec:4}

An analysis of a PV graph provides invaluable information, including possible AP placements that ensure LoS coverage, as well as a useful tool for evaluating whether the placements are optimal. Moreover, PV graphs enable the consideration of diverse network requirements and KPIs, making them an effective interface between our graph analysis methods and AP placement in real-world environments.

\begin{figure}[!t]
\centering
%\hspace{-1cm}
\advance\leftskip-1cm
\advance\rightskip-1cm
\includegraphics[width=8cm]{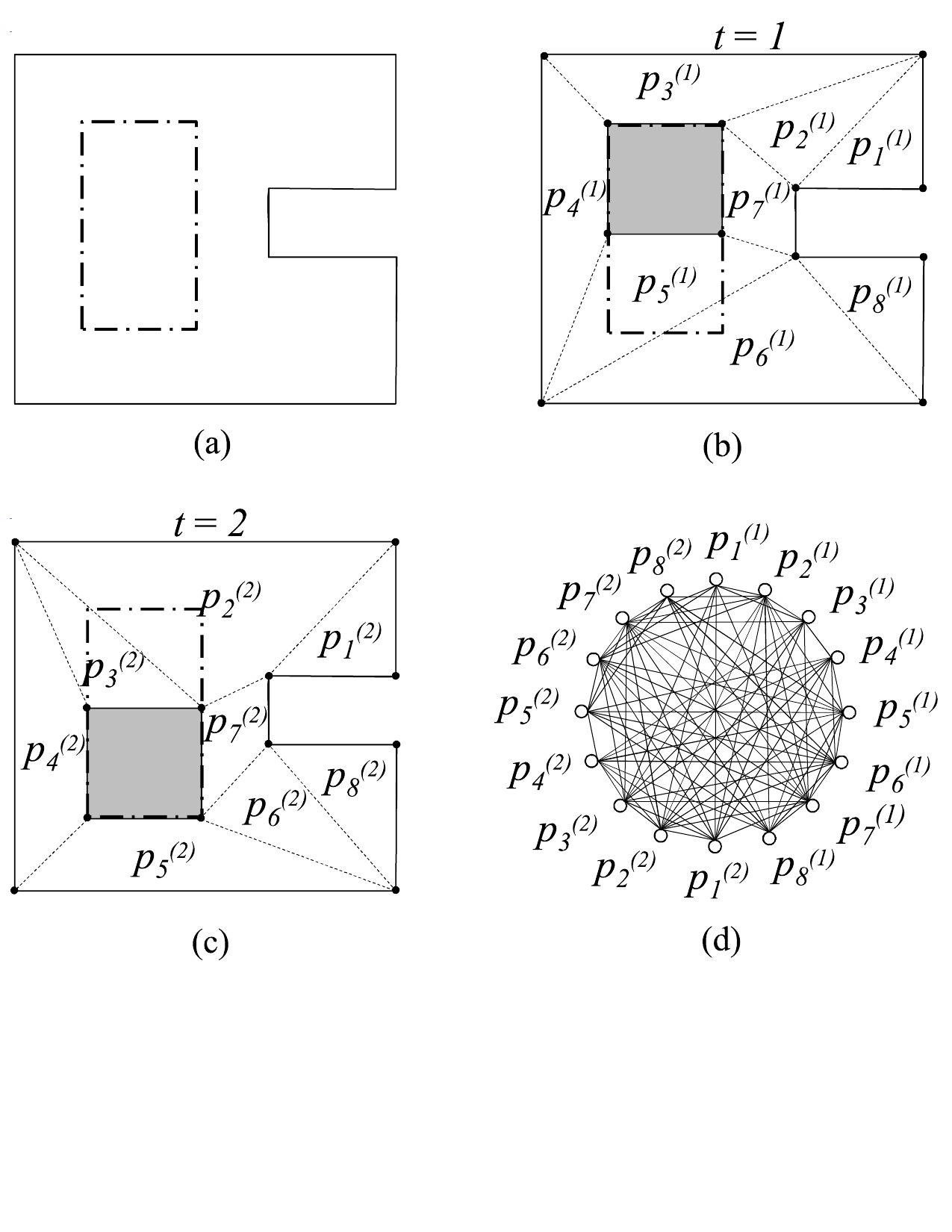} %{vtc_fig0.eps}
    \vspace{-2.8cm}\caption{ Creating the visibility graph for the example layout and a square-shaped obstacle with maximum $T=2$ number of realizations and range $r=\infty$; a) The obstacle  randomly located within the dash-dotted area,
    b) the first and c) the second  layout's realization and the corresponding convex partitioning, and d) the resulting PV graph. }\label{fig:graph-modeling-example}
\vspace{-0.2cm}
\end{figure}
  
\subsection{Graph processing to  ensure a seamless coverage}
Here, we aim to minimize the number of APs required and determine their exact locations to ensure a seamless LoS coverage of the stochastic environment. Assume creating a PV graph that represents a stochastic environment as described in Algorithm \ref{tabel:PV-graph creation}. We bridge between AP placement and the clustering of the nodes of this graph  through the following theorem;

\begin{Theorem}\label{Theorem:clique_partitioning}
Let the partitioning of the nodes of a PV graph into $g$ cliques be $c_1$, $c_2$,..., and $c_g$, such that $\mathcal{V}_r(c_j)\neq \emptyset$ for $j=1,2,... g$.
Deploying a set of $g$ APs, with one placed anywhere within each $\mathcal{V}_r(c_j)$, ensures LoS coverage of the environment across all realizations.
\end{Theorem}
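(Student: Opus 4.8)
The plan is to show that the $g$ deployed APs jointly cover every point in every layout realization, by reducing ``point coverage'' to ``sub-polygon membership in a clique''. First I would fix an arbitrary realization $t$ and an arbitrary point $X$ in $\mathcal{L}^{(t)}$. Since the hyper-triangulation $\mathcal{HT}(R,\mathcal{L}^{(t)})$ partitions the realization, $X$ lies in (at least) one triangle $p_i^{(t)}$. This node belongs to exactly one of the cliques, say $c_j$. I would then place the AP associated with $c_j$ at some point $A_j \in \mathcal{V}_r(c_j)$; by hypothesis this set is non-empty, so such an $A_j$ exists.

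The core of the argument is to chain together three facts so that $A_j$ sees $X$ within range $r$. (i) Because $A_j \in \mathcal{V}_r(c_j) = \bigcap_{p \in c_j} \mathcal{V}_r(p)$ and $p_i^{(t)} \in c_j$, we have $A_j \in \mathcal{V}_r(p_i^{(t)})$, i.e.\ $A_j$ has open, in-range visibility to all three vertices of $p_i^{(t)}$. (ii) Invoke Lemma~\ref{Lemma:visibility_to_inside_triangle_with_obstacle} (or Lemma~\ref{Lemma:visibility_to_inside_triangle_no_obstacle} in the large-obstacle case): since $R$ was chosen to respect the upper bounds of those lemmas, visibility to the vertices of $p_i^{(t)}$ upgrades to visibility to \emph{every} interior point of $p_i^{(t)}$; in particular $A_j \in \mathcal{V}_r(Q)$ for $Q = X$. (iii) By the symmetry of the visibility relation (if $A_j$ sees $X$ without obstruction and $\|A_jX\|\le r$, then $X$ sees $A_j$ under the same conditions), $X$ is covered by the AP at $A_j$. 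Since $X$ and $t$ were arbitrary, every point of every realization is covered by at least one of the $g$ APs, which is the claim.

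I should also verify the degenerate edge cases that the chain above glosses over: a point $X$ lying on a shared edge or vertex of several triangles belongs to more than one node, but picking any one such node is harmless since the argument only needs one covering clique; and the statement implicitly requires the chosen $R$ to be valid for all $T$ realizations simultaneously, which is exactly the ``valid for all realizations'' remark preceding Definition~\ref{definition:PV-graph} (obstacle shapes and sizes, hence $\delta_{\min}$ and the resulting bound on $R$, are fixed across realizations). These are bookkeeping points rather than real difficulties.

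The main obstacle is step (ii): making sure the hypothesis ``$\mathcal{V}_r(c_j)\ne\emptyset$'' in the theorem together with the chosen $R$ actually licenses the jump from vertex-visibility to interior-visibility. This is precisely where Lemmas~\ref{lemma:non-empty-visibility}, \ref{lemma:Covering-visibility}, and \ref{Lemma:visibility_to_inside_triangle_with_obstacle} do the heavy lifting, so the proof is essentially an assembly of those lemmas plus the definitions of $\mathcal{V}_r(\cdot)$ for points, polygons, and clusters; I would make the citation of the relevant lemma explicit at the point of use and note that the non-emptiness of each $\mathcal{V}_r(c_j)$ is what guarantees a legal AP location exists in the first place.
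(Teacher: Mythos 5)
Your proposal is correct and follows essentially the same route as the paper: locate the triangle containing an arbitrary point, pass to the unique clique containing that node, use non-emptiness of $\mathcal{V}_r(c_j)$ to place the AP, and invoke Lemma~\ref{Lemma:visibility_to_inside_triangle_no_obstacle} or Lemma~\ref{Lemma:visibility_to_inside_triangle_with_obstacle} to upgrade vertex-visibility to visibility of every interior point, concluding by symmetry of the visibility relation. Your observation that the realization-dependence of each $\mathcal{V}_r(p_i^{(t)})$ is what makes the intersection $\mathcal{V}_r(c_j)$ valid across all realizations is exactly the right bookkeeping point.
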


\begin{proof}
 See Appendix C.
\end{proof}

Theorem \ref{Theorem:clique_partitioning} suggests that clustering the nodes of the PV graph into the minimum number of cliques identifies the minimum number of APs and their optimal locations required to ensure seamless LoS coverage of the stochastic environment. Building on this implication, we proposed the Maximal Clique Clustering (MCC) technique as a method to partition the nodes of the PV graph into the smallest possible number of cliques \cite{abedi2021visible}. In the MCC method, we iteratively sort the nodes of the graph based on their degrees in an ascending order and add the nodes to the cluster one by one as long as they meet two condition; I) the node is fully connected to all the other nodes in the cluster to form a new clique and II) the new clique has non-empty visibility area. By surfing through all the nodes of the PV graph, the maximal cluster is derived. Finally, we disconnect the maximal cluster from the PV graph and start over the algorithm until no node in the PV graph is left.

Fig. \ref{fig:maximal-dynamic} illustrates the steps of the MCC method applied to the PV graph shown in Fig. \ref{fig:graph-modeling-example}d).
As shown in Fig. \ref{fig:maximal-dynamic}a), the first cluster is initialized by selecting the minimum-degree node, $p_8^{(2)}$.
 The remaining nodes of the PV graph are then added to this cluster one by one in ascending order of degree, provided that each addition forms a clique with the existing nodes in the cluster and has a non-empty visibility area. As a result, this process forms the maximal clique cluster, highlighted in green. The cluster is then removed from the PV graph, and the process is repeated on the remaining nodes, forming the second and third maximal clique clusters, shown in red and blue in Fig. \ref{fig:maximal-dynamic}b) and \ref{fig:maximal-dynamic}c), respectively. 
 The visibility areas of these $g=3$ clusters are depicted in the stochastic environment using the same colors as in Fig. \ref{fig:maximal-dynamic}d).
  As a result, deploying $g=3$ APs, each positioned anywhere within their respective visibility areas, ensures LoS coverage of the stochastic environment, regardless of the location of the obstacle.  However, it remains necessary to demonstrate the optimality of this deployment.

\begin{figure}[!t]
\centering
%%%%%%%%%%%%%%%%%figure%%%%%%%%%%%%%%%%%%%%%%
%\hspace{-1cm}
\advance\leftskip-0.8cm
\advance\rightskip-1cm
\includegraphics[width=9cm]{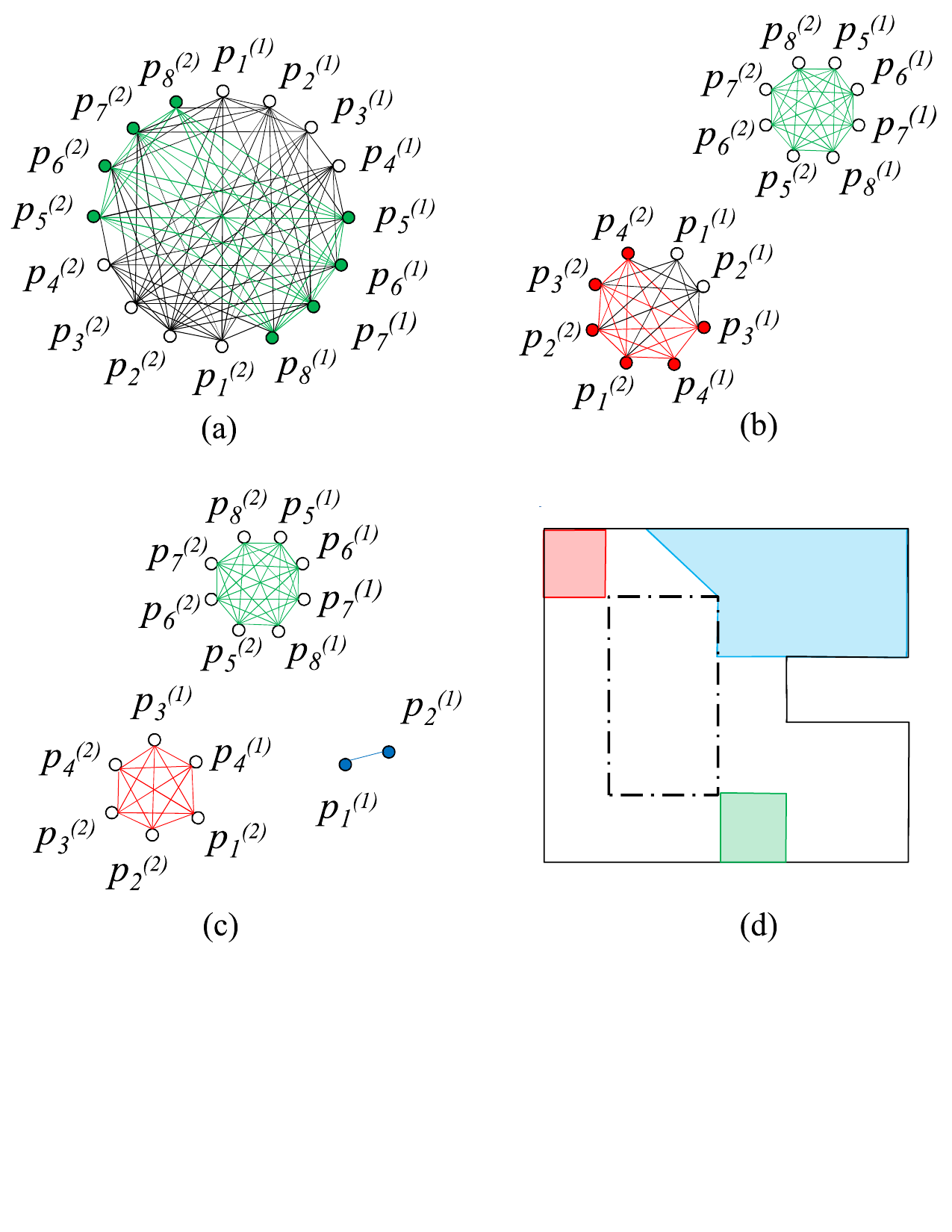} %{vtc_fig0.eps}
    \vspace{-2.9cm}\caption{ Ensuring the full LoS coverage of the example layout and the square-shaped obstacle with random location via MCC method; a) The visibility graph in Fig. \ref{fig:graph-modeling-example} and clustering the first clique (green), b) clustering the second clique (red), c) clustering the third clique (blue), and d) corresponding visibility area of the  $g=3$ clusters.}\label{fig:maximal-dynamic}
\vspace{-0.2cm}
\end{figure}

\begin{Theorem}\label{Theorem:independent set}

Consider a stochastic environment partitioned using hyper-triangulation with a sufficiently small $R$, along with the corresponding PV graph. The maximum size of an independent set of nodes in the PV graph, known as the graph's independence number, serves as a lower bound for the minimum number of APs needed to ensure seamless LoS coverage of the stochastic environment.
\end{Theorem}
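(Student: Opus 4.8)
The plan is to show that if an independent set of $k$ nodes exists in the PV graph, then any valid AP deployment must use at least $k$ APs, whence the independence number lower-bounds the optimum. First I would fix an independent set $\{p_{i_1}^{(t_1)}, p_{i_2}^{(t_2)}, \ldots, p_{i_k}^{(t_k)}\}$ and recall what non-adjacency means by Definition~\ref{definition:PV-graph}: for any two of these nodes, $\mathcal{V}_r(p_{i_m}^{(t_m)}) \cap \mathcal{V}_r(p_{i_n}^{(t_n)}) = \emptyset$ whenever $m \neq n$. The key observation is that a single AP placed at a location $A$ can cover (in LoS, within range) all the vertices of a sub-polygon $p$ if and only if $A \in \mathcal{V}_r(p)$, by Definitions~\ref{point_visibility.def} and~\ref{polygon_visibility.def} together with the relation $\mathcal{V}_r(p) = \bigcap_i \mathcal{V}_r(P_i)$.

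Next I would argue that a seamless LoS deployment must, for each realization $t$ and each triangle $p_i^{(t)}$, contain at least one AP whose location lies in $\mathcal{V}_r(p_i^{(t)})$. This is where the hypothesis of ``sufficiently small $R$'' enters and where Lemma~\ref{Lemma:visibility_to_inside_triangle_with_obstacle} (and Lemma~\ref{Lemma:visibility_to_inside_triangle_no_obstacle} in the obstacle-free or large-obstacle case) does the heavy lifting: with $R$ below the established upper bounds, covering all points inside $p_i^{(t)}$ is \emph{equivalent} to having an AP in $\mathcal{V}_r(p_i^{(t)})$. Concretely, if every interior point of $p_i^{(t)}$ is to enjoy LoS coverage by some AP within range, then in particular the three vertices must be covered; and conversely, an AP in $\mathcal{V}_r(p_i^{(t)})$ suffices by Lemma~\ref{Lemma:visibility_to_inside_triangle_with_obstacle}. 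I would need to be a little careful that no single interior point is jointly covered by ``different APs for different vertices'' in a way that evades membership in $\mathcal{V}_r(p_i^{(t)})$ — this is precisely the content that the small-$R$ lemmas rule out, so I would invoke them to assert that seamless coverage of the triangle forces an AP in its visibility area.

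Combining the two facts: a seamless deployment must place an AP in each $\mathcal{V}_r(p_{i_m}^{(t_m)})$ for $m = 1, \ldots, k$. Since these $k$ visibility areas are pairwise disjoint, no single AP can serve two of them, so the $k$ chosen APs are distinct; hence the deployment uses at least $k$ APs. Taking $k$ to be the independence number gives the claimed lower bound, and comparing with Theorem~\ref{Theorem:clique_partitioning} frames it as a sandwich: independence number $\leq$ minimum number of APs $\leq$ clique cover number of the PV graph.

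I expect the main obstacle to be the rigorous justification of the middle step — that seamless LoS coverage of a (hyper-triangulated) sub-polygon \emph{requires} an AP in its visibility area, not merely that it is sufficient. The subtlety is that coverage of the triangle's \emph{interior} is the physical requirement, while membership in $\mathcal{V}_r(p)$ is phrased in terms of the \emph{vertices}; bridging these needs the small-$R$ lemmas, and I would want to state explicitly which upper bound on $R$ (from Lemmas~\ref{lemma:non-empty-visibility}, \ref{lemma:Covering-visibility}, \ref{Lemma:visibility_to_inside_triangle_with_obstacle}) is being assumed so that the equivalence is airtight. A secondary point to handle cleanly is that the argument is per-realization: the independent set may draw nodes from several realizations, and since a valid deployment must work for \emph{all} realizations simultaneously, the disjointness of visibility areas across different $t$ (guaranteed by Definition~\ref{definition:PV-graph}) is exactly what lets the counting go through.
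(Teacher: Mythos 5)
Your proposal takes a genuinely different, and more ambitious, route than the paper. The paper's proof is a three-line graph-theoretic observation: it fixes an independent set of size $h$ and \emph{any} partition of the PV graph into $g$ cliques, notes that a clique can contain at most one vertex of an independent set, and concludes $h\leq g$. In other words, the paper only lower-bounds the clique-cover number and implicitly identifies ``minimum number of APs'' with ``minimum number of cliques in a partition'' via Theorem~\ref{Theorem:clique_partitioning}. You instead try to lower-bound the size of an \emph{arbitrary} valid deployment, by arguing that any seamless deployment must place an AP inside $\mathcal{V}_r(p)$ for every triangle $p$ and then exploiting the pairwise disjointness of the visibility areas of an independent set. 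If your middle step held, you would be proving a strictly stronger statement than the paper does.

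The problem is that the middle step does not follow from the cited lemmas, and you have their direction backwards. Lemmas~\ref{Lemma:visibility_to_inside_triangle_no_obstacle} and~\ref{Lemma:visibility_to_inside_triangle_with_obstacle} state that $A\in\mathcal{V}_r(p)$ \emph{implies} $A$ sees every point of $p$ within range; they say nothing about the converse, namely that covering every point of $p$ forces some single AP into $\mathcal{V}_r(p)$. The converse is false for any fixed $R>0$: an AP at distance between $r$ and $r+R$ from one vertex of $p$ can still cover interior points of $p$ within range, and an AP whose line of sight to one vertex is blocked by an obstacle corner can still see most of the triangle's interior, so a triangle straddling a range boundary or a shadow boundary can be fully covered by several APs none of which lies in $\mathcal{V}_r(p)$. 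Consequently the counting step ``$k$ pairwise disjoint visibility areas each contain an AP, hence at least $k$ APs'' does not go through as written. You correctly flagged this as the delicate point, but the cure is not to invoke the small-$R$ lemmas (they only give sufficiency, never necessity); it is either to do what the paper does and compare the independent set against clique partitions only, or to prove an additional necessity lemma of the form ``if every point of $p$ is covered, then some AP lies in $\mathcal{V}_{r'}(p)$ for an enlarged range $r'=r+O(R)$,'' which is not available in the paper.
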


\begin{proof}
Let $h$ be the independence number of the PV graph with  $p'_1, p'_2,..., p'_h$ be an independent set of nodes. Also, assume clustering this graph into $g$ cliques $c_1, c_2,..., c_g$, representing $g$ as a number of APs required for LoS coverage of the stochastic environment. Since the nodes $p'_1, p'_2,..., p'_h$ are pairwise disjoint, each one of the clique clusters may contain only one of the independent nodes, and therefore $h\leq g$. 
\end{proof}

According to Theorem \ref{Theorem:independent set}, the independence number of a PV graph serves as a criterion for assessing the optimality of the MCC algorithm in terms of the number of APs.
Analyzing the PV graph in Fig. \ref{fig:graph-modeling-example}a), we identify $h=3$ nodes $p_1^{(1)}, p_8^{(2)}$, and $p_4^{(2)}$ as an independent set.
Thus, according to Theorem \ref{Theorem:independent set}, the $g=h=3$ APs determined in Fig. \ref{fig:maximal-dynamic}d are optimal for ensuring LoS coverage.

\renewcommand{\tablename}{Algorithm}
\begin{table}[!t]
\caption{\small{Maximum Clique  Packing (MCP)}}\label{tabel:Maximum_Clique_partitioning}
\vspace{-0.2cm}
\begin{center}
\hspace{0cm}\begin{tabular}{ l | l}\hline\hline

{\footnotesize{~}}&
{\footnotesize{{\footnotesize{\textbf{Input:} ~~ }}}} \\

{\footnotesize{~}}&
{\footnotesize{{\footnotesize{$\mathcal{G}[p_1^{(1)},p_2^{(1)},...,p_{M^{(T)}}^{(T)}]$ ~~~$\%$ The PV graph; Algorithm \ref{tabel:PV-graph creation} }}}} \\

{\footnotesize{~}}&
{\footnotesize{{\footnotesize{$\alpha_{\text{GAP}}$ ~~~~~~~~~$\%$  Tolerable LoS CGAP, where $0<\alpha_{\text{GAP}}<1$}}}} \\

{\footnotesize{~}}&
{\footnotesize{{\footnotesize{\textbf{Functions:} }}}} \\

{\footnotesize{~}}&
{\footnotesize{{\footnotesize{$\mathcal{V}_r(c)$~~~~~~~~$\%$~Returns the visibility area of the set of nodes $c$}}}} \\

{\footnotesize{~}}&
{\footnotesize{{\footnotesize{$\mathbb{DescSort}(\mathcal{G})$~ $\%$~Sorts the nodes of  $\mathcal{G}$ in descending order}}}} \\

{\footnotesize{~}}&
{\footnotesize{{\footnotesize{$\mathcal{G}.\mathbb{isClique}(c)$~ $\%$~True if the set of nodes $c$ forms  a clique in $\mathcal{G}$}}}}

\\

{\footnotesize{~}}&
{\footnotesize{{\footnotesize{$\mathcal{G}.\mathbb{RealizIdx}(p)$~$\%$~Returns the realization index of $p$ in $\mathcal{G}$}}}}

\\

{\footnotesize{~}}&
{\footnotesize{{\footnotesize{$\mathcal{G}.\mathbb{count}(t)$~~~~~~$\%$~Counts the nodes with  realization index $t$ }}}}

\\

\hline

{\footnotesize{1}}&{\footnotesize{{\footnotesize{$C\longleftarrow \emptyset$  ~$\%$ Initialize the set of clique clusters}}}} \\

{\footnotesize{2}}&
{\footnotesize{{\footnotesize{{\bf{While}} ~~~$\mathcal{G}\neq \emptyset$: }}}} \\

{\footnotesize{3}}&{\footnotesize{{\footnotesize{~~~~$\mathcal{G}\longleftarrow\mathbb{DescSort}(\mathcal{G})~ $ }}}} \\

   {\footnotesize{4}}&
{\footnotesize{{\footnotesize{~~~~$c\longleftarrow \emptyset$ ~~~~~~$\%$ Clique cluster initialization}}}} \\

 {\footnotesize{5}}&{\footnotesize{{\footnotesize{~~~~{\bf{For}}~ $p$ \bf{in} $\mathcal{G}$: }}}} \\
  {\footnotesize{6}}&{\footnotesize{{\footnotesize{~~~~~~~~$c'\longleftarrow c \cup p$  $\%$ adding the node $p$ to the cluster}}}}  \\
 {\footnotesize{7}}&
{\footnotesize{{\footnotesize{~~~~~~~~{\bf{If}} ~~$\mathcal{G}.\mathbb{isClique}(c') $ \bf{and} $\mathcal{V}_r(c')\neq \emptyset$: }}}} \\

{\footnotesize{8}} & {\footnotesize{~~~~~~~~~~~~~$c\longleftarrow  c'$, $t\longleftarrow \mathcal{G}.\mathbb{RealizIdx}(p)$  }}\\

 %{\footnotesize{9}}&
%{\footnotesize{{\footnotesize{~~~~~~~~{\bf{For}} ~~$t=1$~~{\texttt{to}}~~$T$:}}}} \\

 {\footnotesize{9}}&
{\footnotesize{{\footnotesize{~~~~~~~~~~~~{\bf{If}} ~$[\mathcal{G}-c].\mathbb{count}(t)<\alpha_{\text{GAP}} M^{(t)}$: }}}} \\

{\footnotesize{10}}&
{\footnotesize{{\footnotesize{~~~~~~~~~~~~~~~~$\mathcal{G}\leftarrow \mathcal{G}-[p_1^{(t)}, p_2^{(t)},..., p_{M^{(t)}}^{(t)}]$ }}}} \\

{\footnotesize{11}}&
{\footnotesize{{\footnotesize{~~~~$C\leftarrow C\cup c$ }}}} \\

{\footnotesize{12}}&
{\footnotesize{{\footnotesize{~~~~$\mathcal{G}\leftarrow \mathcal{G}-c$ }}}} \\

\hline

  {\footnotesize{~}}&
{\footnotesize{{\footnotesize{\textbf{Output:} }}}} \\

   {\footnotesize{~}}&
{\footnotesize{{\footnotesize{$C$ $~~~~~~~~~~~~~~~~~\%$ The set of clique clusters }}}} \\

\hline

\end{tabular}
\end{center}
\vspace{-0.5cm}
\end{table}

\subsection{Graph processing  to ensure a tolerable coverage gap}

Our objective is to minimize the number of APs required and determine their precise locations to ensure LoS coverage for at least a specified percentage of the stochastic environment, regardless of obstacle positions. In other words, our goal is to ensure that the AP deployment provides LoS coverage for at least $(1-\alpha_{\text{GAP}})\mathcal{A}$ of the stochastic environment, regardless of the exact obstacle locations, where $\mathcal{A}$ represents the total area of the environment, and $\alpha_{\text{GAP}}$ denotes the tolerable LoS Coverage Gap Area Proportion (CGAP).

Algorithm \ref{tabel:Maximum_Clique_partitioning} details the steps in determining the minimum number of APs and their exact locations to ensure a tolerable LoS CGAP in a stochastic environment, referred as Maximum Clique Packing (MCP) method. In this algorithm, at each step, the largest clique in the entire PV graph is identified. To achieve this, given $\alpha_{\text{GAP}}$, we first generate the PV graph of the stochastic environment using Algorithm \ref{tabel:PV-graph creation} and then sort its nodes in descending order of their degree.
Starting with the first node (highest degree), nodes are sequentially added to the cluster as long as the cluster forms a clique with a non-empty visibility area, continuing until the maximum-sized clique cluster is formed. 
%In this process over the PV graph, as soon as only $\alpha_{\text{GAP}} M^{(t)}$ or less number of nodes remain from any realization $t$ for $1\leq t\leq T$, we remove all the remaining nodes associated to $t$. 
In this process over the PV graph, once the number of nodes remaining from any realization $t$, $1\leq t\leq T$,  falls to $\alpha_{\text{GAP}} M^{(t)}$ or less, we remove all the remaining nodes associated to $t$. 
This ensures that the remaining nodes associated with $t$ are excluded from subsequent clique clusters, as their corresponding sub-polygons in the environment do not require LoS coverage. As soon as the maximum clique  is derived, we remove it from the PV graph and start over the algorithm until no node in the PV graph is left.
Thus, placing one AP within the visibility area of each clique cluster guarantees LoS coverage for at least $(1-\alpha_{\text{GAP}})\mathcal{A}$ of the stochastic environment, irrespective of obstacle locations.

  \pagestyle{empty}

%\vspace{0mm}
%\section{Numerical Results}
%\label{sec:4}
%\vspace{-0mm}

Fig. \ref{fig:maximum-dynamic} illustrates the steps of the MCP method, detailed in Algorithm \ref{tabel:Maximum_Clique_partitioning}, applied to the stochastic environment shown in Fig. \ref{fig:graph-modeling-example} with the tolerable CGAP set to $\alpha_{\text{GAP}}=0.25$. 
By sorting the nodes of the PV graph in Fig. \ref{fig:graph-modeling-example}d) in descending order based on degree and adding the nodes sequentially to the cluster, Fig.  \ref{fig:maximum-dynamic}a) shows the first maximum clique with a green color. Removing this cluster from the PV graph and sorting the remaining nodes in descending order, Fig. \ref{fig:maximum-dynamic}b) illustrates the second maximum clique cluster in red color. Removing this cluster from the remaining PV graph, we end up with $2$ remaining nodes at each realization $t=1,2$. Here, since $M^{(1)}= M^{(2)}=8$, the number of remaining nodes at realization $t$ is not bigger than $\alpha_{\text{GAP}} M^{(t)}=2$ for $t=1,2$, and therefore, we leave these four remaining nodes un-clustered in Fig. \ref{fig:maximum-dynamic}c). As a result, the two clique clusters in Fig. \ref{fig:maximum-dynamic}c) represent two APs, each one deployed anywhere inside
the clique visibility areas shown by similar colors in Fig. \ref{fig:maximum-dynamic}d). 
Thus, these $l=2$ APs ensure LoS coverage for at least $(1-\alpha_{\text{GAP}})$ of the stochastic area, regardless of the obstacle's location.

\begin{figure}[!t]
\centering
%%%%%%%%%%%%%%%%%figure%%%%%%%%%%%%%%%%%%%%%%
%\hspace{-1cm}
\advance\leftskip-1.4cm
\advance\rightskip-1cm
\includegraphics[width=9cm]{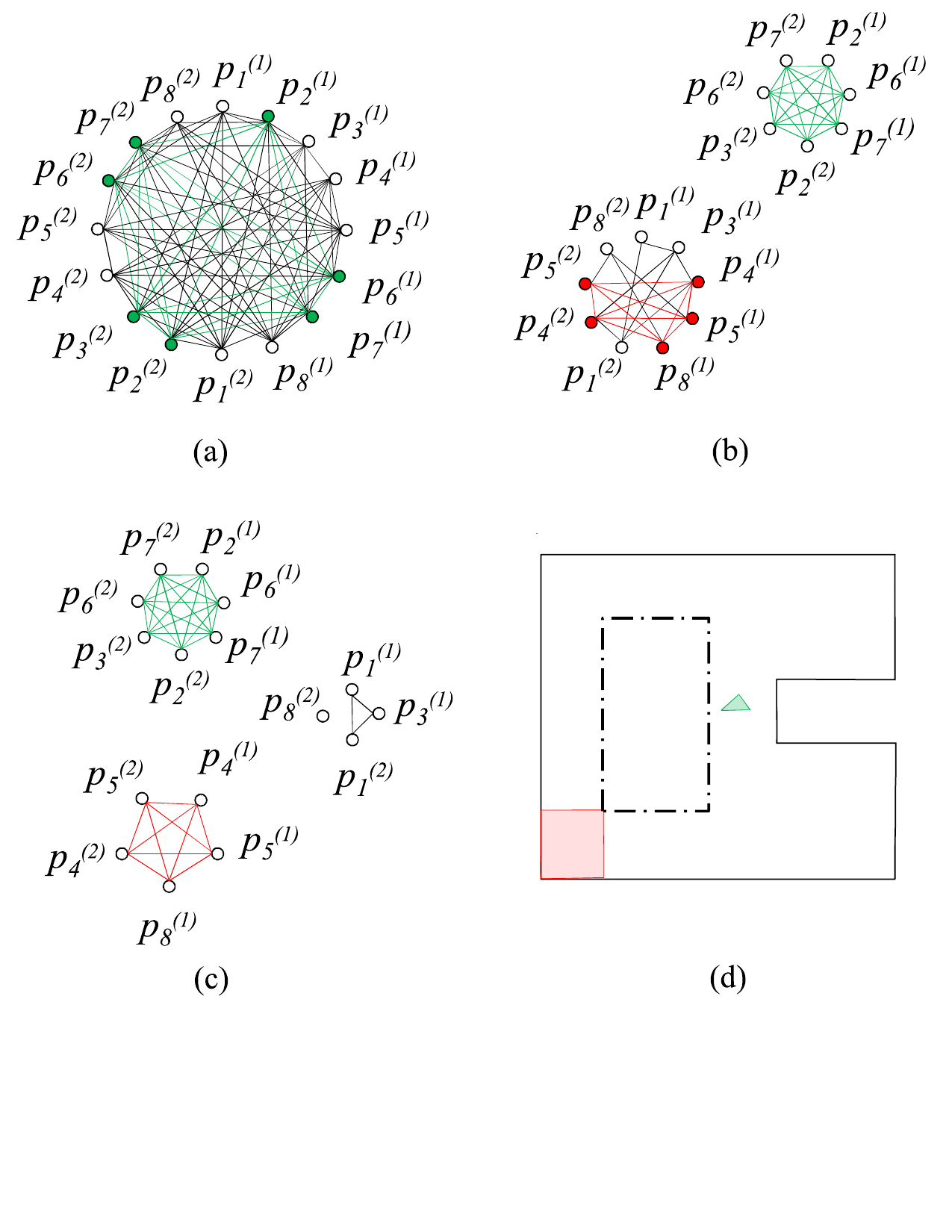} %{vtc_fig0.eps}
    \vspace{-2.4cm}\caption{Performance of MCP method to ensuring a tolerable LoS CGAP $\alpha_{\text{GAP}}=0.25$ for the example layout and the square-shaped
obstacle with random location; a) The PV graph determined in Fig. \ref{fig:graph-modeling-example} and clustering the first clique (green), b) clustering the second clique (red), c) representation of $l=2$ clusters while leaving the remaining 4 nodes un-clustered, and d) corresponding visibility areas of the two clique clusters, representing the potential locations for $l=2$ APs.}\label{fig:maximum-dynamic}
\vspace{-0.2cm}
\end{figure}

\vspace{0mm}
\section{Simulation Results}
\label{sec:5}
\vspace{-0mm}

\begin{figure*}[!t]
\centering
%\hspace{-1cm}
\advance\leftskip-1cm
\advance\rightskip-1cm
\includegraphics[width=20cm]{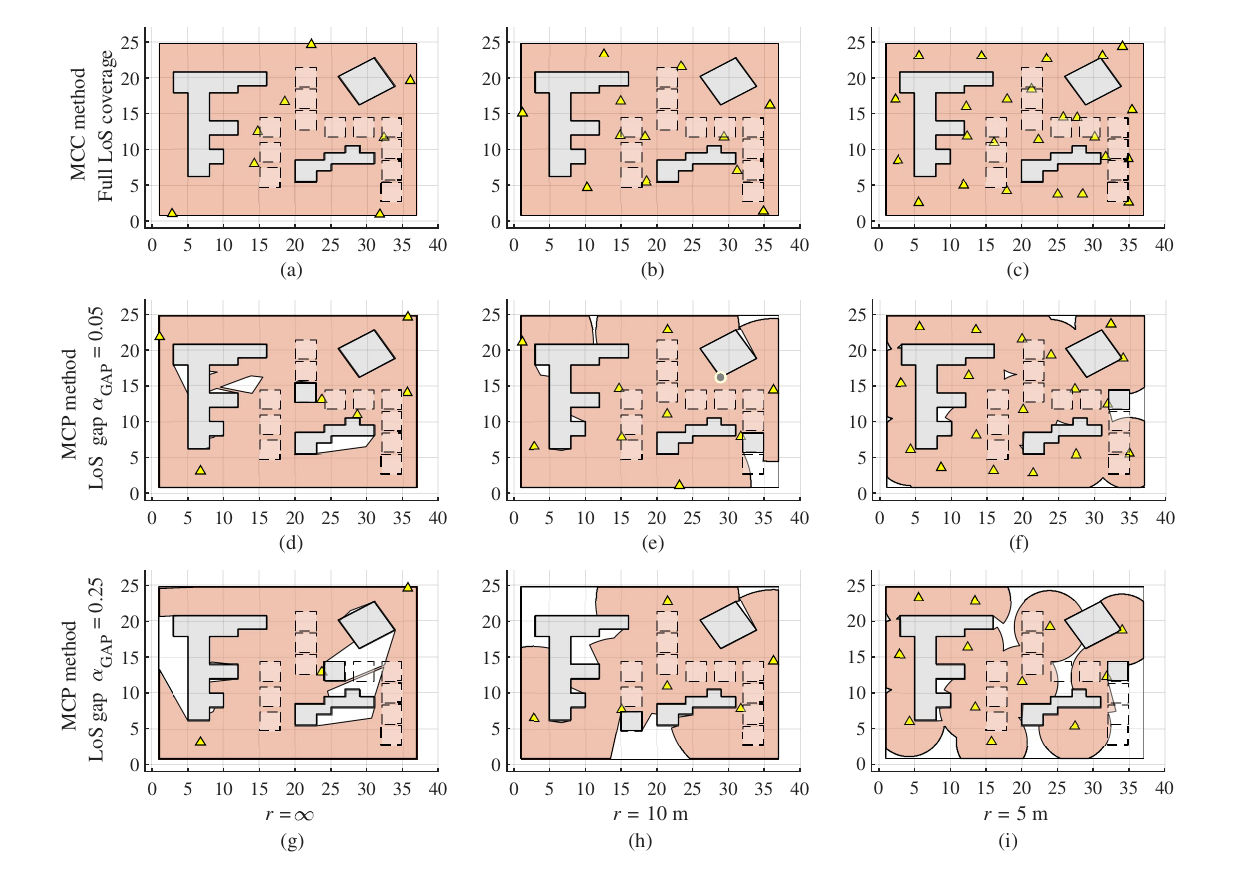} %{vtc_fig0.eps}
    \vspace{-0.7cm}\caption{ Performance analysis of the proposed methods in determining the minimum number of APs (yellow triangles) and their exact locations in a rectangle-shaped environment with $b=4$ obstacles (gray polygons) from which one of them have stochastic/dynamic location (dashed polygons). Ensuring a LoS coverage via MCC method with AP range a) $r=\infty$, b) $r=10$\,m, and c) $r=5$\,m. Ensuring a tolerable LoS coverage area gap $\alpha_{\text{GAP}}=0.05$ via MCP method with AP range d) $r=\infty$, e) $r=10$\,m, and f) $r=5$\,m. Ensuring a tolerable LoS coverage area gap $\alpha_{\text{GAP}}=0.25$ via MCP method with AP range g) $r=\infty$, h) $r=10$\,m, and i) $r=5$\,m.}\label{fig:simulation}
\vspace{-0.2cm}
\end{figure*}

We evaluate the performance of the MCC and MCP methods in determining the minimum number of APs and their precise locations to ensure LoS coverage in a \mbox{24\,m $\times$36\,m} stochastic environment containing $b=4$ obstacles of different shapes and sizes, as shown in Fig. \ref{fig:simulation}. 
In this environment, the three larger obstacles have fixed, deterministic locations, while the smallest square-shaped obstacle, measuring \mbox{2.5\,m $\times$ 2.5\,m}, 
has $T=12$ possible locations shown in the figure.
%has a stochastic location with a spatial distribution spanning from the middle to the lower-right part of the environment.
%In this setting, we considered $T=12$ realizations for the location of the square-shaped obstacle and 
We partitioned the environment using the hyper-triangulation \mbox{ $\mathcal{HT}(R=2\delta_{\text{min}}=2.5\,\text{m}, \mathcal{L})$} that results in the total of $\sum_{t=1}^{T=12}M^{(t)}=13386$ triangles representing the number of nodes in the equivalent PV graph. Here,  $\delta_{\text{min}}$ denotes the  IRCH of the square-shaped obstacle and therefore the selection of the parameter as $R=2\delta_{\text{min}}$ satisfies the criteria described in Lemmas \ref{lemma:non-empty-visibility}, \ref{lemma:Covering-visibility}, and \ref{Lemma:visibility_to_inside_triangle_with_obstacle}.  

By setting the AP range $r=\infty$, Fig. \ref{fig:simulation}a) represents the resulting $g=8$ APs and their locations by applying the MCC method over the equivalent PV graph. Similarly,  running the MCC method over the PV graph with  AP range $r=10$\,m and $r=5$\,m, we determine $g=12$ and $g=25$ APs whose locations are shown in Fig. \ref{fig:simulation}b) and  \ref{fig:simulation}c) respectively. In all these three figures, it is observed that the LoS link from a user anywhere in the environment and at least one AP within it's range is guaranteed regardless of the square-shaped obstacle positioned at any of the $T=12$ dashed-marked locations.

By applying the MCP method with the tolerable maximum coverage gap $\alpha_{\text{GAP}}=0.05$ over the PV graph with AP rang $r=\infty$, $r=10$\,m, and $r=5$\,m, we derive $g=6$, $g=9$, and $g=18$ APs with their exact locations as shown in Figs. \ref{fig:simulation}d),  \ref{fig:simulation}e), and  \ref{fig:simulation}f), respectively.  
All these figures show that the resulting AP placements guarantees LoS link between a user within at least $(1-\alpha_{\text{GAP}})=95\%$ of area and at least one AP within it's range, regardless of the location of the square-shaped obstacle. Similarly, by apply the MCP method with maximum coverage gap $\alpha_{\text{GAP}}=0.25$ over the PV graph with AP rang $r=\infty$, $r=10$\,m, and $r=5$\,m, we derive $g=3$, $g=6$, and $g=13$ APs with their exact locations as shown in Figs. \ref{fig:simulation}g),  \ref{fig:simulation}h), and  \ref{fig:simulation}i), respectively.  
All these figures show that the resulting AP placements guarantees LoS link between a user within at least $(1-\alpha_{\text{GAP}})=75\%$ of the area and at least one AP within it's range, regardless of the location of the square-shaped obstacle. 

An overall report on the results of Fig. \ref{fig:simulation} is summarized in Table \ref{tab:Simulation_figure_report}. It is observed here that applying the MCP method with $\alpha_{\text{GAP}}=0.05$ over the environment, the minimum LoS coverage for around $97\%$ of the area is achieved for all three AP ranges. It is while by setting $\alpha_{\text{GAP}}=0.25$, the minimum  LoS coverage of  $86\%$, $83\%$, and $79\%$ of the area are provided. It is also observed that for a certain AP range, the number of APs required decreases by  $25\%$ to $30\%$ to guarantee LoS coverage for $95\%$ of the area  in comparison to the number of APs required to ensure $100\%$ LoS coverage.

\setcounter{table}{0} 
\renewcommand{\tablename}{Table}
\begin{table}[!t]
\centering
\caption{Performance metric of MCC and MCP methods over the stochastic environment shown in Fig. \ref{fig:simulation}}
\label{tab:Simulation_figure_report}
\begin{tabular}{|c|c|c|c|}
\hline
 
Method          & range  $r$ [m]        & Nubmer of APs $g$          & Coverage            \\ \hline \hline
\multirow{3}{*}{MCC } & $\infty$   & 8    & 100$\%$    \\ \cline{2-4} 
                      & 10    & 12    & 100$\%$   \\ \cline{2-4} 
                      & 5   & 25   & 100$\%$  \\ \hline \hline
\multirow{3}{*}{\shortstack{MCP \\ $\alpha_{\text{GAP}}=0.05$}} & $\infty$    & 6    & 97$\%$   \\ \cline{2-4} 
                      & 10    & 9   & 98$\%$    \\ \cline{2-4} 
                      & 5    & 18    & 97$\%$    \\ \hline \hline
\multirow{3}{*}{\shortstack{MCP \\ $\alpha_{\text{GAP}}=0.25$}} & $\infty$    & 3   &  86$\%$    \\ \cline{2-4} 
                      & 10   & 6    & 83$\%$    \\ \cline{2-4} 
                      & 5   & 12    & 79$\%$    \\ \hline
\end{tabular}

\end{table}

\color{black}{}

      % For better layout:
%\newpage
\pagestyle{empty}

\vspace{0mm}
\section{Conclusion}
\label{sec:6}

This paper proposed a novel method for ensuring LoS connectivity in stochastic environments with randomly positioned obstacles by optimizing wireless AP placement. To this end, the proposed method applied a graph-based framework, where the environment was partitioned using hyper-triangulation, and AP placement was optimized via maximal clique clustering and maximum clique packing techniques. To control the computational complexity of the algorithm, an upper bounds on the hyper-triangulation parameter was derived, which directly influenced the number of nodes in the graph. Unlike conventional stochastic geometry-based approaches, the proposed framework is deterministic and guarantees either a full LoS coverage or a controlled LoS gap while dynamically adapting to variations in obstacle positions. Simulation results demonstrated that the proposed approach can reduce the number of required APs by $25\%$, while maintaining a tolerable $5\%$ coverage gap compared to full LoS deployment. These findings underscore the efficiency and adaptability of the proposed solution in enabling robust high-frequency wireless communication and precise mobile network-based positioning in complex indoor/outdoor environments.

\pagestyle{empty}

\vspace{0mm}
\bibliographystyle{IEEEtran}

\bibliography{main}

\end{document}